\newtheorem{thm}{Theorem}[section]
\newtheorem{cor}[thm]{Corollary}
\newtheorem{lema}[thm]{Lemma}
\newtheorem{prop}[thm]{Proposition}
\theoremstyle{definition}
\newtheorem{defn}[thm]{Definition}
\theoremstyle{remark}
\newtheorem{rem}[thm]{Remark}
\numberwithin{equation}{section}
\newcommand{\R}{\mathbb R}
\newcommand{\N}{\mathbb N}
\newcommand{\ve}{\varepsilon}
\newcommand{\lam}{\lambda}
\newcommand{\cd}{\rightharpoonup}
\def\diver{\operatorname {\mathrm{div}}}
\begin{document}
\title[Homogenization]{Eigenvalue homogenization problem  \\ with indefinite weights}
\author[J Fern\'andez Bonder, J P Pinasco, A M Salort]{Juli\'an Fern\'andez Bonder, Juan P. Pinasco, Ariel M. Salort }
\address{Departamento de Matem\'atica \hfill\break \indent   IMAS - CONICET
 \hfill\break \indent FCEyN - Universidad de Buenos Aires
\hfill\break \indent Ciudad Universitaria, Pabell\'on I \hfill\break \indent   (1428)
Av. Cantilo s/n. \hfill\break \indent Buenos Aires, Argentina.}
\email[J. Fernandez Bonder]{jfbonder@dm.uba.ar}
\urladdr[J. Fernandez Bonder]{http://mate.dm.uba.ar/~jfbonder}
\email[J.P. Pinasco]{jpinasco@dm.uba.ar}
\urladdr[J.P. Pinasco]{http://mate.dm.uba.ar/~jpinasco}
\email[A.M. Salort]{asalort@dm.uba.ar}

\begin{abstract}
In this work we study the homogenization problem for nonlinear elliptic equations  involving $p-$Laplacian type operators with sign changing weights. We study the  asymptotic behavior of variational eigenvalues,  which consist on a double sequence of eigenvalues. We show that the $k-$th positive eigenvalue goes to infinity when the average of the weight is nonpositive, and converge to the $k-$th variational eigenvalue of the limit problem when the average is positive for any $k\ge 1$.
 \end{abstract}

\maketitle

\section{Introduction}

In this work we will consider the following  nonlinear eigenvalue problem with indefinite weight,
\begin{equation}\label{1.1}
\begin{cases}
-\diver(a_{\ve}(x,\nabla u)) = \lambda \rho_\ve(x) |u|^{p-2} u & \text{in }\Omega\subset \R^N\\
u=0 & \text{on } \partial\Omega,
\end{cases}
\end{equation}
where $\lambda$ is the eigenvalue parameter, $\rho_\ve$ is a bounded weight function with nontrivial positive and negative parts and the operator $\diver(a_\ve(x,\nabla u))$ is a quasilinear $(p-1)-$homogeneous in the second variable with some precise hypotheses that are stated below (see assumptions (H0)--(H8) in section 2). The most relevant example of such operator is given by
$$
a_\ve(x,\xi) = |A_\ve(x)\xi \cdot \xi|^{\frac{p-2}{2}} A_\ve(x)\xi,
$$
with $A_\ve(x)\in \R^{N\times N}$ bounded symmetric matrix and positive definite uniformly in $\ve>0$.


The domain $\Omega$ is assumed to be bounded but no regularity hypotheses are imposed on $\partial \Omega$.

For this eigenvalue problem \eqref{1.1}, it is known that there exist two sequences of eigenvalues $\{\lam_{\ve,k}^+\}_{k\ge 1}$, $\{\lam_{\ve,k}^-\}_{k\ge 1}$,  such that
$$
\lam_{\ve,k}^+ \to \infty,\quad \lam_{\ve,k}^- \to -\infty \qquad \mbox{as } k\to \infty.
$$
See  \cite{FBPS1} for a survey.

The asymptotic behavior as $\ve\downarrow 0$ of these nonlinear eigenvalues in the unweighted case (i.e. $\rho_\ve \equiv 1$) was studied in \cite{Con, Ch-dP, Chi1}. In particular, for the problem
$$
-\diver(a_{\ve}(x,\nabla u)) = \lambda |u|^{p-2} u \qquad \text{in }\Omega\\
$$
it is proved that the $k-$th variational eigenvalue converges to  the $k-$th variational eigenvalue of the limit problem
$$
-\diver(a(x,\nabla u)) = \lambda |u|^{p-2} u \qquad \text{in }\Omega\\
$$
where $a(x,\xi)$ is the so-called $G-$limit of the operators $a_\ve(x,\xi)$. See \cite{Ch-dP}. Of course, the convergence is understood up to a subsequence. See section 2 for the definition and some elemetary properties of the $G-$convergence.

The $G-$convergence of monotone operators has a long story and there are plenty of results in the literature due to the usefullness of this concept in the limit behavior of boundary value problems, specially in {\em homogenization theory}, see, for instance \cite{DF1992, Chi1, Fus} and references therein. 

The purpose in this paper is to extend the results of \cite{Ch-dP} to the undefinite weighted case. The main result of this work is the following.

\begin{thm}\label{teo.main}
Assume that $a_\ve(x,\xi)$ satisfies \em{(H0)--(H8)} defined is section 2. Moreover, assume that $a_\ve(x,\xi)$ $G-$converges to $a(x,\xi)$. Let $\rho_\ve\in L^\infty(\Omega)$ be such that $\rho_\ve \rightharpoonup \rho$ weakly* in $L^\infty(\Omega)$. Then
\begin{itemize}
\item If $\rho^+ = 0$, $\lambda_{k,\ve}^+\to\infty$ as $\ve\downarrow 0$.

\item If $\rho^+\neq 0$, $\lambda_{k,\ve}^+\to \lambda_k^+$ as $\ve\downarrow 0$, where $\{\lambda_k^+\}_{k\ge 1}$ are the positive eigenvalues associated to the operator $a(x,\xi)$ with weight $\rho$.
\end{itemize}
\end{thm}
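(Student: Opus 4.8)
The plan is to argue throughout via the Ljusternik--Schnirelmann variational characterization of the positive eigenvalues. Write $\mathcal{A}_\ve(u)=\int_\Omega A_\ve(x,\nabla u)\,dx$ for the even, $p$-homogeneous, coercive energy attached to $a_\ve$ (so $a_\ve=\partial_\xi A_\ve$), and $\mathcal{A}$ for the one attached to $a$; put $\mathcal{M}_\ve^+=\{u\in W_0^{1,p}(\Omega):\int_\Omega\rho_\ve|u|^p\,dx=1\}$ and $\mathcal{M}^+=\{u\in W_0^{1,p}(\Omega):\int_\Omega\rho|u|^p\,dx=1\}$, so that $\lambda_{k,\ve}^+=\inf_C\sup_{u\in C}\mathcal{A}_\ve(u)$, the infimum being over compact symmetric $C\subset\mathcal{M}_\ve^+$ of Krasnoselskii genus $\gamma(C)\ge k$, and likewise for $\lambda_k^+$. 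From \em{(H0)--(H8)}, the $G$-convergence of $a_\ve$ to $a$, and Rellich's theorem one uses: (a) $\mathcal{A}_\ve$ $\Gamma$-converges to $\mathcal{A}$ in the strong $L^p(\Omega)$-topology, so $\mathcal{A}(u)\le\liminf_\ve\mathcal{A}_\ve(u_\ve)$ whenever $u_\ve\rightharpoonup u$ in $W_0^{1,p}(\Omega)$ and every $u$ has a recovery sequence $u_\ve\to u$ in $L^p(\Omega)$ with $\mathcal{A}_\ve(u_\ve)\to\mathcal{A}(u)$; (b) monotonicity of $a_\ve(x,\cdot)$ makes $u\mapsto\mathcal{A}_\ve(u)$ convex; (c) $a$ again obeys the structural hypotheses, so the $\lambda_k^+$ are well defined and finite when $\rho^+\neq0$; and (d) if $u_\ve\rightharpoonup u$ in $W_0^{1,p}(\Omega)$ then $\int_\Omega\rho_\ve|u_\ve|^p\,dx\to\int_\Omega\rho|u|^p\,dx$ (since $|u_\ve|^p\to|u|^p$ in $L^1(\Omega)$ and $\rho_\ve\rightharpoonup\rho$ weakly-$*$ in $L^\infty(\Omega)$), uniformly over $W_0^{1,p}(\Omega)$-bounded sets. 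With (a)--(d) the case $\rho^+=0$ is immediate by contradiction: if $\liminf_\ve\lambda_{k,\ve}^+<\infty$ then, since $\lambda_{1,\ve}^+\le\lambda_{k,\ve}^+$, along a subsequence there is $u_\ve\in\mathcal{M}_\ve^+$ with $\mathcal{A}_\ve(u_\ve)$ bounded; coercivity bounds $u_\ve$ in $W_0^{1,p}(\Omega)$, so after extraction $u_\ve\rightharpoonup u$ and $1=\int_\Omega\rho_\ve|u_\ve|^p\,dx\to\int_\Omega\rho|u|^p\,dx\le0$, absurd.

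\emph{Upper bound, $\rho^+\neq0$.} Fix $\delta>0$ and a compact symmetric $C\subset\mathcal{M}^+$ with $\gamma(C)\ge k$ and $\sup_C\mathcal{A}\le\lambda_k^++\delta$; it is bounded in $W_0^{1,p}(\Omega)$. A direct transplantation fails because recovery sequences displace points, so I would continuize the recovery by a partition of unity: choose a finite symmetric $L^p$-$\delta$-net $u_1,\dots,u_m\in C$, a partition of unity $\phi_1,\dots,\phi_m$ on $C$ subordinate to the balls $B_{L^p}(u_i,\delta)$ and compatible with $u\mapsto -u$, and recovery sequences $u_i^\ve\to u_i$ in $L^p(\Omega)$ with $\mathcal{A}_\ve(u_i^\ve)\to\mathcal{A}(u_i)$ chosen so that $u_i^\ve=-u_j^\ve$ whenever $u_j=-u_i$; then $\Psi_\ve(u):=\sum_i\phi_i(u)u_i^\ve$ is continuous and odd. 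Since $\Psi_\ve(u)$ is a convex combination of the $u_i^\ve$, convexity and $p$-homogeneity of $\mathcal{A}_\ve$ give $\mathcal{A}_\ve(\Psi_\ve(u))\le\max_i\mathcal{A}_\ve(u_i^\ve)\le\sup_C\mathcal{A}+o(1)$ uniformly in $u\in C$, while $\|\Psi_\ve(u)-u\|_{L^p}\le\delta+o(1)$ uniformly, whence $\int_\Omega\rho_\ve|\Psi_\ve(u)|^p\,dx=1+O(\delta)+o(1)$ uniformly, in particular positive for small $\ve$. Rescaling, $\Phi_\ve(u):=\Psi_\ve(u)\big(\int_\Omega\rho_\ve|\Psi_\ve(u)|^p\,dx\big)^{-1/p}$ is a continuous odd map into $\mathcal{M}_\ve^+$, so $C_\ve:=\Phi_\ve(C)$ is compact, symmetric and $\gamma(C_\ve)\ge\gamma(C)\ge k$; by $p$-homogeneity $\sup_{C_\ve}\mathcal{A}_\ve\le\sup_C\mathcal{A}+O(\delta)+o(1)$, hence $\lambda_{k,\ve}^+\le\sup_{C_\ve}\mathcal{A}_\ve$, and letting $\ve\downarrow0$ then $\delta\downarrow0$ gives $\limsup_\ve\lambda_{k,\ve}^+\le\lambda_k^+$.

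\emph{Lower bound, $\rho^+\neq0$.} Set $\mu:=\liminf_\ve\lambda_{k,\ve}^+$; if $\mu=\infty$ there is nothing to prove, so pass to a subsequence with $\lambda_{k,\ve}^+\to\mu<\infty$. Fix $\delta>0$ and pick competitors $C_\ve\subset\mathcal{M}_\ve^+$ with $\gamma(C_\ve)\ge k$ and $\sup_{C_\ve}\mathcal{A}_\ve\le\lambda_{k,\ve}^++\delta$; they are uniformly bounded in $W_0^{1,p}(\Omega)$, hence relatively compact in $L^p(\Omega)$. Let $C_0$ be the set of all $L^p$-limits of sequences $u_{\ve_j}\in C_{\ve_j}$, $\ve_j\downarrow0$: it is symmetric and compact in $L^p(\Omega)$, by (a) every $u\in C_0$ has $\mathcal{A}(u)\le\mu+\delta$, and by (d) $\int_\Omega\rho|u|^p\,dx=\lim_j\int_\Omega\rho_{\ve_j}|u_{\ve_j}|^p\,dx=1$, so $C_0\subset\mathcal{M}^+$. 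The crucial point is $\gamma(C_0)\ge k$: since $0\notin C_0$ there is $\delta_0>0$ with $\gamma(N_{\delta_0}(C_0))=\gamma(C_0)$ (the $\delta_0$-neighborhood taken in $L^p$), and $C_\ve\subset N_{\delta_0}(C_0)$ for small $\ve$ (else some $u_\ve\in C_\ve$ remain at $L^p$-distance $\ge\delta_0$ from $C_0$ while a subsequence converges in $L^p$ to a point of $C_0$), so $k\le\gamma(C_\ve)\le\gamma(N_{\delta_0}(C_0))=\gamma(C_0)$. Hence $C_0$ is an admissible competitor for $\lambda_k^+$, giving $\lambda_k^+\le\sup_{C_0}\mathcal{A}\le\mu+\delta$; letting $\delta\downarrow0$ completes the argument.

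\emph{Where the difficulty will sit.} Both delicate points are invisible in the unweighted theory of \cite{Ch-dP}. In the upper bound one cannot transplant a near-optimal set by $\Gamma$-convergence alone, since homogenization displaces recovery sequences; the partition-of-unity construction, leaning on convexity and $p$-homogeneity of $\mathcal{A}_\ve$, is what manufactures a genuine continuous odd competitor, and the $\ve$- and $\rho_\ve$-dependent normalization onto $\mathcal{M}_\ve^+$ must be controlled so as to cost only $O(\delta)$. In the lower bound the real obstruction is topological: the limit set $C_0$ is a priori only $L^p$-compact, because the energy bound yields weak $W_0^{1,p}$-compactness of the $C_\ve$ but homogenization prevents strong $W_0^{1,p}$-convergence of their elements; the hard part will be to know that the variational eigenvalues of the limit operator $a$ are unchanged when the competing symmetric genus-$k$ sets are required to be compact only for this weaker topology. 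Securing this---through the coercivity of $a$, the $L^p$-lower semicontinuity of $\mathcal{A}$, and a density/approximation argument---is the technical heart of the proof.
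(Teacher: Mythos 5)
Your handling of the case $\rho^+=0$ (a soft compactness contradiction in place of the paper's quantitative estimate through Corollary \ref{lema.clave.2}) is valid, and your upper bound is essentially the paper's argument: your partition-of-unity map $\Psi_\ve$ plays exactly the role of the paper's projection $P_{\ve_j}$ onto the convex hull of the Mosco recovery points $\{\pm u_{\ve_j,i}\}$; both rest on convexity and $p$-homogeneity of the energy and on controlling the drifting normalization $\int_\Omega\rho_\ve|\cdot|^p\,dx$ up to $O(\delta)+o(1)$.

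The genuine gap is in the lower bound. The classes $\mathcal{C}_k$, $\mathcal{D}_k$ defining $\lambda_k^+$ consist of symmetric sets of genus at least $k$ that are compact in $W^{1,p}_0(\Omega)$, whereas your limit set $C_0$ is only compact in $L^p(\Omega)$ (equivalently, weakly compact in $W^{1,p}_0(\Omega)$): the energy bound yields only $W^{1,p}_0$-boundedness of the $C_\ve$, and homogenization genuinely obstructs strong $W^{1,p}$-convergence of their elements. Hence $C_0$ is not an admissible competitor, and $\lambda_k^+\le\sup_{C_0}\mathcal{A}$ does not follow from the definition. You acknowledge this and defer it to ``a density/approximation argument'', but that equivalence---that the inf-sup over $L^p$-compact, finite-energy, genus-$k$ subsets of $M^+$ returns the same value $\lambda_k^+$---is precisely the hard content, and it is not supplied; the obvious repairs fail (an odd continuous map of $C_0$ onto a $W^{1,p}_0$-compact set preserving the genus will not preserve the energy bound, and passing to closed convex hulls loses control of $\sup\mathcal{A}$ because the energy is not upper semicontinuous in the relevant topologies). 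The paper avoids the issue altogether: it keeps the near-optimal sets $C_{\ve_j}$ themselves---which are $W^{1,p}_0$-compact, hence admissible for the limit problem---as the competitors, and shows, by extracting weak limits of near-maximizers of the $\ve_j$-Rayleigh quotient and of the limit Rayleigh quotient over $C_{\ve_j}$ and comparing them, that $\sup_{v\in C_{\ve_j}}\int_\Omega\Phi(x,\nabla v)\,dx\,/\int_\Omega\rho|v|^p\,dx$ is bounded by $\lim_j\lambda_{\ve_j,k}^++o(1)$. Either prove the topological equivalence you invoke, or rework the lower bound so that the set handed to the min-max characterization of $\lambda_k^+$ is one of the original $W^{1,p}_0$-compact sets.
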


Our approach follows closely the one in \cite{Ch-dP}. The main difference is the fact that we cannot work with a uniform normalization condition as in the unweighted case. The normalization condition varies with $\ve$ and that has to be taken care of. 

\begin{rem}
Obviously, an analogous statement holds for the negative eigenvalues with the obvious modifications.
\end{rem}

For second order linear elliptic operators, the eigenvalue convergence for the problem of periodic homogenization with sign changing weights was studied recently in \cite{Naz}. Our results here are closely related to theirs, although several differences arise. Of course, in our setting we are not able to use asymptotic expansions, nor orthogonality of eigenfunctions, So, our proofs are different, based mainly on the variational arguments developed in \cite{Ch-dP}. The main drawback of our approach is that we were unable to obtain one of their results, the convergence of the rescaled sequences of eigenvalues which diverges and the corresponding limit problem.

On the other hand, our hypotheses in $a_{\ve}$ go beyond periodic homogenization, and  we have relaxed the regularity hypotheses on $\Omega$, since in \cite{Naz} they work with domains  of class $C^{2,\alpha}$. Also, different boundary conditions can be handled in this way as in \cite{FBPS2}.

Now, a natural question is the study of a quantitative version of Theorem \ref{teo.main}. That is, give some precise rate of convergence or divergence of the eigenvalues.

Recently, in \cite{FBPS} we have obtained the rate of convergence of eigenvalues of problem \eqref{1.1} in the case where the operator $a_\ve(x,\xi)$ is independent of $\ve$ and the wheight function is positive and given in terms of a periodic function $\rho$, as $\rho_\ve(x) = \rho(\tfrac{x}{\ve})$.

See also the bibliography in \cite{FBPS} for references about the linear problem and \cite{FBPS2} for the analysis of different boundary conditions. Moreover, in \cite{Salort} the analysis for the Fu\v{c}ik eigenvalue problem for the $p-$laplacian is done and in \cite{Salort2} the fractional laplace operator is studied.

In order to perform such analysis, we need to make some further assumptions on the weights $\rho_\ve$ and on the operators $a_\ve(x,\xi)$.

First, we assume that the weights $\rho_\ve$ are given in terms of a periodic function $\rho$ in the form $\rho_\ve(x) = \rho(\tfrac{x}{\ve})$. In this case, it follows that
$$
\rho_\ve \rightharpoonup \bar \rho := \int_Y \rho(y)\, dy,
$$
where $Y=[0,1]^N$ and the funcion $\rho$ is assumed to be $Y-$periodic. Under these assumptions, we obtain a rate of divergence for the eigenvalues. More precisely, we prove

\begin{thm}\label{main}
Assume that $a_\ve(x,\xi)$ satisfies \em{(H0)--(H8)} defined is section 2. Assume, moreover that $\rho_\ve(x) = \rho(\tfrac{x}{\ve})$ where $\rho$ is a $Y-$periodic function, $Y=[0,1]^N$. Then\begin{enumerate}
\item if $\bar\rho = 0$  then $\lam_{\ve,1}^{\pm} = O(\ve^{-1})$  as $\ve\downarrow 0$,

\item  if $\bar\rho > 0$  then  $\ve \lam_{\ve,1}^-$ is bounded away from zero as $\ve\downarrow 0$,

\item if $\bar\rho < 0$  then   $\ve\lam_{\ve,1}^+$ is bounded away from zero as $\ve\downarrow 0$.
\end{enumerate}
Moreover, in the case of periodic homogenization, with $a_{\ve}(x, \xi) = a(\tfrac{x}{\ve}, \xi)$, we have that
\begin{enumerate}
\item if $\bar\rho = 0$  then $\lam_{\ve,k}^{\pm} = O(\ve^{-1}) $ as $\ve\downarrow 0$,

\item if $\bar\rho > 0$  then $\ve^p \lam_{\ve,k}^-$ is bounded away from infinity as $\ve\downarrow 0$,

\item if $\bar\rho < 0$  then $\ve^p \lam_{\ve,k}^+$ is bounded away from infinity as $\ve\downarrow 0$.
\end{enumerate}
\end{thm}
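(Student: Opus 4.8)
The plan is to work throughout with the variational characterization of the eigenvalues, reducing each assertion to an estimate on the Rayleigh quotient
\[
Q_\ve(u):=\frac{\int_\Omega a_\ve(x,\nabla u)\cdot\nabla u\,dx}{\int_\Omega \rho_\ve\,|u|^p\,dx}.
\]
Recall (see \cite{FBPS1,Ch-dP}) that $\lambda_{\ve,k}^+=\inf_C\sup_{u\in C}Q_\ve(u)$, the infimum taken over compact symmetric subsets $C$ of $\{w\in W_0^{1,p}(\Omega):\int_\Omega\rho_\ve|w|^p>0\}$ of Krasnoselskii genus $\ge k$, and $\lambda_{\ve,k}^-=\sup_C\inf_{u\in C}Q_\ve(u)$ over such sets contained in $\{w:\int_\Omega\rho_\ve|w|^p<0\}$. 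By (H0)--(H8) there are constants $0<\alpha\le\beta$, independent of $\ve$, with $\alpha|\xi|^p\le a_\ve(x,\xi)\cdot\xi\le\beta|\xi|^p$; since only orders of magnitude in $\ve$ are claimed, the operator enters only through $\alpha,\beta$. Two kinds of estimates are then needed: (a) admissible genus-$k$ sets on which $Q_\ve$ has the announced order (this gives the upper bounds on $\lambda_{\ve,k}^+$ and the lower bounds on $\lambda_{\ve,k}^-$), and (b) a pointwise bound for $|Q_\ve(w)|$ valid for \emph{every} admissible $w$ (this gives the remaining directions). Moreover, by Theorem~\ref{teo.main} and the Remark following it, all the diverging eigenvalues occurring in the statement are already known to tend to $\pm\infty$, so only the rates are at stake.

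For (a) I fix $\phi_1,\dots,\phi_k\in C_c^\infty(\Omega)$ with pairwise disjoint supports; then $(t_1,\dots,t_k)\mapsto\sum_i t_i\,\phi_i\,\psi_\ve$ is a linear embedding, and the image of a sphere, intersected with the relevant admissibility set, is compact, symmetric and of genus exactly $k$. If $\bar\rho=0$ I take $\psi_\ve(x)=1+\ve\,\eta(x/\ve)$ with $\eta$ smooth and $Y$-periodic chosen so that $\int_Y\rho\eta>0$ (possible since $\int_Y\rho^2>0$, by density). Expanding $|1+\ve\eta(x/\ve)|^p=1+p\ve\,\eta(x/\ve)+O(\ve^2)$ and using that $\rho$ has zero mean — so that $\int_\Omega\rho(x/\ve)g(x)\,dx=o(\ve)$ for fixed Lipschitz $g$ of compact support, obtained by writing $\rho=\diver_y R$ with $R\in L^\infty_{per}$ of zero mean and integrating by parts once — one gets $\int_\Omega\rho_\ve|\phi_i\psi_\ve|^p\,dx=p\,\ve\,\|\phi_i\|_{L^p}^p\int_Y\rho\eta+o(\ve)=\Theta(\ve)>0$, while $\nabla(\phi_i\psi_\ve)$ is bounded uniformly in $\ve$, so $\int_\Omega a_\ve(x,\nabla(\phi_i\psi_\ve))\cdot\nabla(\phi_i\psi_\ve)=O(1)$. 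By disjointness of supports, on the sphere $\{\sum|t_i|^p=1\}$ the numerator and denominator of $Q_\ve$ are, up to fixed constants, $\sum|t_i|^p\,O(1)$ and $\sum|t_i|^p\,\Theta(\ve)$; hence $\lambda_{\ve,k}^+=O(\ve^{-1})$, and with $\int_Y\rho\eta<0$ the same computation gives $\lambda_{\ve,k}^-=O(\ve^{-1})$. If $\bar\rho>0$ and we estimate $\lambda_{\ve,k}^-$ (the case $\bar\rho<0$, $\lambda_{\ve,k}^+$, being symmetric under $\rho\mapsto-\rho$), I instead take $\psi_\ve(x)=w(x/\ve)$ with $w$ smooth $Y$-periodic and $\int_Y\rho|w|^p<0$; then $\int_\Omega\rho_\ve|\phi_i\psi_\ve|^p\to\|\phi_i\|_{L^p}^p\int_Y\rho|w|^p<0$ and $\int_\Omega|\nabla(\phi_i\psi_\ve)|^p=O(\ve^{-p})$, so the same genus-$k$ argument gives $\inf_{u\in C}Q_\ve(u)\ge-C\ve^{-p}$, i.e. $\ve^p\lambda_{\ve,k}^-\ge-C$.

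For (b) — the lower bound on $|\lambda_{\ve,1}^-|$ when $\bar\rho>0$ (and, via $\rho\mapsto-\rho$, on $|\lambda_{\ve,1}^+|$ when $\bar\rho<0$) — the key point is that a function $w$ with $\int_\Omega\rho_\ve|w|^p\le0$ must oscillate at scale $\ve$. I tile $\R^N$ by the cubes $Q_j=\ve(j+Y)$, $j\in\Z^N$, and extend $w\in W_0^{1,p}(\Omega)$ by zero; on each $Q_j$ periodicity gives $\int_{Q_j}\rho_\ve\,dx=\bar\rho\,|Q_j|$, while Poincaré--Wirtinger on the cube together with $\big||w|^p-|c|^p\big|\le p\max(|w|,|c|)^{p-1}|w-c|$ and Hölder gives $\int_{Q_j}\big||w|^p-\langle|w|^p\rangle_{Q_j}\big|\,dx\le C\ve\,\|w\|_{L^p(Q_j)}^{p-1}\|\nabla w\|_{L^p(Q_j)}$. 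Summing over $j$ and applying Hölder in the index $j$ yields
\[
\int_\Omega\rho_\ve|w|^p\,dx=\bar\rho\,\|w\|_{L^p(\Omega)}^p+E,\qquad |E|\le C\ve\,\|w\|_{L^p(\Omega)}^{p-1}\,\|\nabla w\|_{L^p(\Omega)}.
\]
Then $\int_\Omega\rho_\ve|w|^p\le0$ forces $\bar\rho\,\|w\|_{L^p}\le C\ve\,\|\nabla w\|_{L^p}$, hence $\big|\int_\Omega\rho_\ve|w|^p\big|\le|E|\le C'\ve^p\|\nabla w\|_{L^p}^p$; combined with $\int_\Omega a_\ve(x,\nabla w)\cdot\nabla w\ge\alpha\|\nabla w\|_{L^p}^p$ this gives $Q_\ve(w)\le-\alpha(C')^{-1}\ve^{-p}$ for every admissible $w$, so $\lambda_{\ve,1}^-\le-c\,\ve^{-p}$; in particular $\ve\,\lambda_{\ve,1}^-$ is bounded away from zero, and together with (a) this even pins $\ve^p\lambda_{\ve,k}^-\asymp-1$ when $\bar\rho>0$.

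The routine point that some cubes $Q_j$ meet $\partial\Omega$ is harmless: the zero extension of $w$ lies in $W^{1,p}(Q_j)$, Poincaré--Wirtinger requires no boundary condition, and the identity $\int_{Q_j}\rho_\ve=\bar\rho|Q_j|$ is unaffected. I expect the two genuinely delicate points to be: (i) the choice of oscillation amplitude in the test function for $\bar\rho=0$ — the naive $\psi_\ve=w(x/\ve)$ only yields $O(\ve^{-p})$, and one must use the amplitude-$\ve$ modulation $1+\ve\eta(x/\ve)$ \emph{together with} the mean-zero cancellation of $\rho$ to make the weighted mass exactly of order $\ve$, neither larger (which would break the uniform control of the admissibility set) nor smaller (which would destroy the bound); and (ii) performing the Poincaré--Sobolev step for $|w|^p$ rather than for $w$, and tracking the resulting $\|w\|_{L^p}^{p-1}$ factor through the sum over the cubes. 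The rest is bookkeeping with the fixed constants $\alpha,\beta$ and the periodic-averaging limit $\int_\Omega(\rho\eta)(x/\ve)\,g\,dx\to\big(\int_Y\rho\eta\big)\int_\Omega g$.
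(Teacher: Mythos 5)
Your proof is correct and rests on the same two pillars as the paper's --- the oscillatory-integral estimate $\bigl|\int_\Omega(\rho_\ve-\bar\rho)|v|^p\,dx\bigr|\le C\ve\|v\|_p^{p-1}\|\nabla v\|_p$ (Corollary~\ref{lema.clave}, which you rederive via the tiling/Poincar\'e--Wirtinger argument) and explicit genus-$k$ test sets --- but the constructions differ, and in one place you prove strictly more. For the upper bounds the paper uses, for $k=1$, the corrector $v=u\,(1+\ve|\rho(\tfrac{x}{\ve})|^{p-2}\rho(\tfrac{x}{\ve}))^{1/p}$ (which tacitly requires $\rho$ regular enough to differentiate; your smooth $\eta$ with $\int_Y\rho\eta>0$ avoids this) and, for $k\ge2$, spans of first eigenfunctions on $k$ disjoint $\ve_0$-cubes together with the scaling $\mu_{1,\ve}(Q_i)=\ve_0^{-p}\mu_{1,\ve/\ve_0}(Q_0)$ --- which is why the paper restricts the $k\ge 2$ statements to periodic homogenization $a_\ve(x,\xi)=a(\tfrac{x}{\ve},\xi)$. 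Your disjointly supported bumps times a fixed corrector use only (H3)--(H5), so your version of the $k$-th eigenvalue upper bounds holds for general $a_\ve$. The more significant difference is in the lower bounds for $\bar\rho\neq0$: the paper shifts to $\sigma=\rho+c$ with $\bar\sigma=0$ and invokes Sturmian comparison, which only yields $|\lambda^{\mp}_{\ve,k}|\ge c\,\ve^{-1}$, whereas your observation that the sign constraint $\int_\Omega\rho_\ve|w|^p\,dx<0$ forces $\bar\rho\,\|w\|_p\le C\ve\|\nabla w\|_p$, fed back into the oscillatory estimate, gives $\bigl|\int_\Omega\rho_\ve|w|^p\,dx\bigr|\le C\ve^p\|\nabla w\|_p^p$ and hence the sharp bound $|\lambda_{\ve,1}^-|\ge c\,\ve^{-p}$. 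This is precisely the lower bound the authors state in their Remark that they could not obtain (for $p=2$ it recovers the Nazarov rate $\ve^{-2}$), so it deserves a careful write-up, but I see no flaw: the bootstrap uses only Corollary~\ref{lema.clave}, (H2), and the characterization $\lambda_{\ve,1}^-=\sup_{w\in M^-_\ve}Q_\ve(w)$. One direction you leave implicit is the lower bound $\lambda_{\ve,1}^{\pm}\ge c\,\ve^{-1}$ when $\bar\rho=0$ (the paper's Step~1, needed if ``$O(\ve^{-1})$'' is read, as the paper does, as a two-sided order); it follows in one line from your displayed identity with $\bar\rho=0$ together with Poincar\'e's inequality $\|w\|_p\le\mu_1^{-1/p}\|\nabla w\|_p$, so you should state it explicitly.
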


\begin{rem} In \cite{Naz}, where only linear eigenvalue problems and periodic homogenization was considered, the authors proved that $c_k^- \ve^{-2}\le \lam_{\ve,k}^- \le C_k^- \ve^{-2 }$ when $\bar \rho >0$. That result is obtained by using a factorization technique in order to construct the eigenfunctions asymptotic. We cannot use here these kind of arguments, due to the nonlinear character of the problem, and we get only the upper bound with a worse lower bound.
\end{rem}

Finaly, in the case where the operators $a_\ve(x,\xi)$ are independent of $\ve$ we can obtain a rate of convergence of the eigenvalyes. 
\begin{thm}\label{main2} Let $a_{\ve}(x,\nabla u)=a(x,\nabla u)$ be fixed, not depending on $\ve$, and $\rho_\ve$ as in Theorem \ref{main}. If $\bar \rho>0$, we have the following estimate,
$$
|\lam_{\ve,k}^{+}-\lam_k| \le C_k \ve,
$$
where $C_k$ is given explicitly, and depend only on $p$, $N$ and $\|\rho\|_{\infty}$.

An analogous result holds when $\bar\rho<0$.
\end{thm}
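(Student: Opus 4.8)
The plan is to combine the Ljusternik--Schnirelman characterization of the eigenvalues with a quantitative, corrector-type version of the convergence $\rho_\ve\rightharpoonup\bar\rho$. Write $A(x,\xi)=\int_0^1 a(x,t\xi)\cdot\xi\,dt$ for the potential of $a$, so that $a=\partial_\xi A$; since $a$ is $(p-1)$--homogeneous, $A$ is $p$--homogeneous, and by (H0)--(H8) it is coercive, $\int_\Omega A(x,\nabla u)\,dx\ge c\,\|\nabla u\|_{L^p(\Omega)}^p$. Because $\bar\rho>0$, the limit weight has nontrivial positive part, so Theorem \ref{teo.main} already yields the qualitative convergence $\lam_{\ve,k}^+\to\lam_k$, and (up to a fixed normalization constant) the positive variational eigenvalues have the characterization
$$
\lam_{\ve,k}^+=\inf_{K\in\Sigma_k(\mathcal{M}_\ve^+)}\ \sup_{u\in K}\int_\Omega A(x,\nabla u)\,dx,\qquad \lam_k=\inf_{K\in\Sigma_k(\mathcal{M}^+)}\ \sup_{u\in K}\int_\Omega A(x,\nabla u)\,dx,
$$
where $\mathcal{M}_\ve^+=\{u\in W_0^{1,p}(\Omega)\colon\int_\Omega\rho_\ve|u|^p\,dx=1\}$, $\mathcal{M}^+=\{u\colon\bar\rho\int_\Omega|u|^p\,dx=1\}$, and $\Sigma_k(\cdot)$ is the family of compact symmetric subsets of the indicated manifold with Krasnoselskii genus at least $k$. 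The crucial point is that the numerator is the \emph{same} functional in both problems, since the operator $a_\ve=a$ is fixed; only the constraint manifold moves with $\ve$. So the whole task is to control, uniformly on families of bounded energy, the distortion produced by replacing $\int_\Omega\rho_\ve|u|^p\,dx$ by $\bar\rho\int_\Omega|u|^p\,dx$.

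The quantitative ingredient is the corrector estimate: for every $u\in W_0^{1,p}(\Omega)$,
$$
\left|\int_\Omega\rho_\ve|u|^p\,dx-\bar\rho\int_\Omega|u|^p\,dx\right|\le p\,\|G\|_{L^\infty(Y)}\,\ve\,\|u\|_{L^p(\Omega)}^{p-1}\,\|\nabla u\|_{L^p(\Omega)},
$$
where $G$ comes from the cell problem. Since $\int_Y(\rho-\bar\rho)\,dy=0$, there is a $Y$--periodic function $\phi$ solving $-\Delta\phi=\rho-\bar\rho$ in $Y$; by elliptic regularity $\phi\in C^{1,\alpha}$ and $\|\nabla\phi\|_{L^\infty(Y)}\le C(N)\,\|\rho\|_{L^\infty}$. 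Setting $G=-\nabla\phi$ one has $\rho_\ve-\bar\rho=\ve\,\diver_x(G(\cdot/\ve))$ in $\mathcal{D}'(\R^N)$. Since $|u|^p\in W_0^{1,1}(\Omega)$ with $\nabla(|u|^p)=p\,|u|^{p-2}u\,\nabla u$, integrating by parts and using Hölder's inequality yields the displayed bound. Combined with the coercivity of $A$ and Poincaré's inequality, this gives: on the sublevel set $\{\int_\Omega A(x,\nabla u)\,dx\le\Lambda\}$ one has $\left|\int_\Omega\rho_\ve|u|^p\,dx-\bar\rho\int_\Omega|u|^p\,dx\right|\le C(p,N,\|\rho\|_\infty)\,\Lambda\,\ve$.

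Now the two-sided comparison. For the upper bound, fix $\delta>0$ and choose $K\in\Sigma_k(\mathcal{M}^+)$ with $\int_\Omega A(x,\nabla u)\,dx\le\lam_k+\delta$ on $K$. By the previous step, $\int_\Omega\rho_\ve|u|^p\,dx\ge1-C(\lam_k+\delta)\ve>0$ on $K$ for $\ve$ small, so the odd continuous radial retraction $\Pi_\ve(u)=u\,(\int_\Omega\rho_\ve|u|^p\,dx)^{-1/p}$ carries $K$ into $\mathcal{M}_\ve^+$ without decreasing the genus, i.e. $\Pi_\ve(K)\in\Sigma_k(\mathcal{M}_\ve^+)$; by $p$--homogeneity of $A$, for $u\in K$,
$$
\int_\Omega A(x,\nabla\Pi_\ve(u))\,dx=\frac{1}{\int_\Omega\rho_\ve|u|^p\,dx}\int_\Omega A(x,\nabla u)\,dx\le\frac{\lam_k+\delta}{1-C(\lam_k+\delta)\ve},
$$
hence $\lam_{\ve,k}^+\le(\lam_k+\delta)(1-C(\lam_k+\delta)\ve)^{-1}$; letting $\delta\to0$ gives $\lam_{\ve,k}^+\le\lam_k+C_k\ve$ for $\ve$ small, and in particular $\lam_{\ve,k}^+\le2\lam_k$. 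The lower bound is the mirror image: take $K_\ve\in\Sigma_k(\mathcal{M}_\ve^+)$ with $\int_\Omega A(x,\nabla u)\,dx\le\lam_{\ve,k}^++\delta$ on $K_\ve$; the bound $\lam_{\ve,k}^+\le2\lam_k$ makes the energy on $K_\ve$ uniformly bounded in $\ve$, so $\bar\rho\int_\Omega|u|^p\,dx\ge1-C\ve>0$ on $K_\ve$, and retracting onto $\mathcal{M}^+$ via $u\mapsto u\,(\bar\rho\int_\Omega|u|^p\,dx)^{-1/p}$ and using $p$--homogeneity gives $\lam_k\le(\lam_{\ve,k}^++\delta)(1-C\ve)^{-1}$, whence $\lam_{\ve,k}^+\ge\lam_k-C_k\ve$ after $\delta\to0$. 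Combining, $|\lam_{\ve,k}^+-\lam_k|\le C_k\ve$, with $C_k$ of the form $C(p,N,\|\rho\|_\infty)\,\lam_k^2$ times the fixed structural constants of $a$ and the Poincaré constant of $\Omega$, hence completely explicit. The case $\bar\rho<0$ is identical after interchanging the roles of $\lam_{\ve,k}^+$ and $\lam_{\ve,k}^-$.

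The main obstacle is the corrector estimate together with the uniformity of all the constants: one must express $\rho_\ve-\bar\rho$ as $\ve$ times the divergence of a bounded $Y$--periodic field, integrate by parts against the non-smooth test function $|u|^p$, and then absorb the resulting $\|\nabla u\|_{L^p(\Omega)}^p$ by a uniform energy bound on the (quasi-)minimizing families, checking that neither $\ve$ nor $\delta$ enters in a way that would degrade the rate $O(\ve)$. The genus and retraction bookkeeping between the $\ve$--dependent manifolds $\mathcal{M}_\ve^+$ and $\mathcal{M}^+$ --- precisely the place where the non-uniform normalization noted after Theorem \ref{teo.main} enters --- is then routine.
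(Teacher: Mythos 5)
Your proposal is correct and follows essentially the same route as the paper: both rest on the quantitative oscillatory bound $\left|\int_\Omega(\rho_\ve-\bar\rho)|u|^p\,dx\right|\le C(p,N,\|\rho\|_\infty)\,\ve\,\|u\|_{L^p}^{p-1}\|\nabla u\|_{L^p}$ (which you re-derive via the cell problem $-\Delta\phi=\rho-\bar\rho$, while the paper simply invokes it as Corollary \ref{lema.clave}) combined with a two-sided comparison of the Ljusternik--Schnirelmann min-max values over near-optimal admissible sets, absorbing the error through the uniform energy bound. The only cosmetic differences are that the paper compares the reciprocals $1/\lambda_{\ve,k}$ and $1/\lambda_k$ using a finite $r$-net of the compact test set, whereas you compare the eigenvalues directly via odd radial retractions between the two normalization manifolds.
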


\subsection*{Organization of the paper}
After this introduction, the rest of the paper is organized as follows:

In section 2 we recall some preliminary results needed in the rest of the paper. In section 3 we prove the main result of the paper, namely Theorem \ref{teo.main}. In section 4 we prove our results on the divergence of eigenvalues, Theorem \ref{main}. Finally, in section 5, we prove the rate of convergence of the eigenvalues Theorem \ref{main2}.

\section{Preliminary results}
\subsection{$G-$convergence of monotone operators}

Let us consider the operator $\mathcal{A}\colon W_0^{1,p}(\Omega)\to
W^{-1,p'}(\Omega)$ given by
$$
\mathcal{A} u := -div(a(x,\nabla u)).
$$
 We assume that
$a\colon \Omega\times \R^N\to \R^N$ satisfies,  for every $\xi\in\R^N$ and  a.e. $x\in
\Omega$, the following conditions:

\begin{enumerate}
\item[(H0)] {\it measurability:} $a(\cdot,\cdot)$ is a Carath\'eodory function;
    that is, $a(x,\cdot)$ is continuous a.e. $x\in \Omega$,  and $a(\cdot,\xi)$ is
    measurable for every $\xi\in\R^N$.

\item[(H1)] {\it  monotonicity:} $0\le (a(x,\xi_1)-a(x,\xi_2))(\xi_1-\xi_2)$.

\item[(H2)] {\it  coercivity:} $\alpha |\xi|^p \le a(x,\xi)\cdot \xi$.

\item[(H3)] {\it  continuity:} $|a(x,\xi)| \le \beta|\xi|^{p-1}$.

\item[(H4)] {\it  $(p-1)-$homogeneity:} $a(x,t\xi)=t^{p-1} a(x,\xi)$ for every $t>0$.

\item[(H5)] {\it  oddness:} $a(x,-\xi) = -a(x,\xi)$.
\end{enumerate}

Let us introduce $\Psi(x,\xi_1, \xi_2)=a(x,\xi_1)\cdot \xi_1 + a(x,\xi_2)\cdot \xi_2$ for all
$\xi_1, \xi_2 \in \R^N$, and all $x\in \Omega$; and let $\delta=min\{p/2, (p-1)\}$.

\begin{enumerate}
\item[(H6)] {\it  equi-continuity:} 
$$
|a(x,\xi_1) -a(x,\xi_2)| \le c \Psi(x,\xi_1, \xi_2)^{(p-1-\delta)/p}((a(x,\xi_1) -a(x,\xi_2))\cdot (\xi_1-\xi_2))^{\delta/p}
$$

\item[(H7)] {\it  cyclical monotonicity:} $\sum_{i=1}^k  a(x,\xi_i)\cdot (\xi_{i+1}-\xi_i) \le 0$, for all $k\ge 1$, and $\xi_1,\dots, \xi_{k+1}$, with $\xi_1=\xi_{k+1}$.

\item[(H8)] {\it  strict monotonicity:} let $\gamma = \max(2,p)$, then
$$
\alpha |\xi_1-\xi_2|^{\gamma}\Psi(x,\xi_1,\xi_2)^{1-(\gamma/p)}\le (a(x,\xi_1)-a(x,\xi_2))\cdot (\xi_1-\xi_2).
$$
\end{enumerate}

Under these conditions  $\mathcal{A}$ is a monotone operator. Moreover, we have the
following results:

\begin{prop}[\cite{Con}, Lemma 3.3]\label{potential.f}
Given $a(x,\xi)$ satisfying  {\em (H0)--(H8)} there exists a unique Carath\'eodory function
$\Phi$ which is even, $p-$homogeneous strictly convex and differentiable in the
variable $\xi$ satisfying
\begin{equation}\label{cont.coer.phi}
\alpha |\xi|^p \le \Phi(x,\xi)\le \beta |\xi|^p
\end{equation}
for all $\xi\in \R^N$ a.e. $x\in\Omega$ such that
$$
\nabla_{\xi} \Phi(x,\xi)= p\, a(x,\xi)
$$
and normalized such that $\Phi(x,0)=0$.
\end{prop}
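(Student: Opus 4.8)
\medskip

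\noindent\textbf{Plan of proof.} The key observation is that hypothesis (H7) is exactly Rockafellar's cyclical monotonicity condition, which characterizes those vector fields that arise as gradients of convex functions. So, for a.e.\ fixed $x\in\Omega$, the plan is to build a convex potential for $\xi\mapsto a(x,\xi)$ by Rockafellar's construction and then deduce all remaining properties from (H0)--(H8) together with the $(p-1)$-homogeneity. Explicitly, set
\begin{equation*}
\phi(x,\xi):=\sup\Big\{\sum_{i=1}^{n}a(x,\xi_i)\cdot(\xi_{i+1}-\xi_i)\ :\ n\ge1,\ \xi_1=0,\ \xi_{n+1}=\xi,\ \xi_2,\dots,\xi_n\in\R^N\Big\}
\end{equation*}
and put $\Phi:=p\,\phi$. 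By (H7) the sum along any cycle is nonpositive, so Rockafellar's theorem gives that $\phi(x,\cdot)$ is a proper convex lower semicontinuous function with $\phi(x,0)=0$ and $a(x,\cdot)\subseteq\partial\phi(x,\cdot)$.

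The next step is to turn this inclusion into an equality. By (H1), (H0) and (H3), $a(x,\cdot)$ is monotone, continuous and defined on all of $\R^N$, hence maximal monotone: if $(\xi_0,\eta_0)$ is monotonically related to its graph, testing with $\xi_0-tz$ and letting $t\downarrow0$ forces $\eta_0=a(x,\xi_0)$. Since $a(x,\cdot)$ is a maximal monotone operator contained in the monotone operator $\partial\phi(x,\cdot)$, the two coincide; in particular $\partial\phi(x,\cdot)$ is single-valued on $\R^N$, $\phi(x,\cdot)$ is finite and hence locally Lipschitz, and a finite convex function with single-valued subdifferential is of class $C^1$. Therefore $\nabla_\xi\phi(x,\cdot)=a(x,\cdot)$, that is, $\Phi$ is differentiable in $\xi$ and $\nabla_\xi\Phi=p\,a$.

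The remaining properties are then essentially forced. Differentiating $t\mapsto\phi(x,t\xi)$ for $t>0$ and using (H4), $\tfrac{d}{dt}\phi(x,t\xi)=a(x,t\xi)\cdot\xi=t^{p-1}\big(a(x,\xi)\cdot\xi\big)$; integrating from $0$ to $1$ with $\phi(x,0)=0$ gives $\phi(x,\xi)=\tfrac1p\,a(x,\xi)\cdot\xi$, so $\Phi(x,\xi)=a(x,\xi)\cdot\xi$ and $\Phi(x,t\xi)=t^p\Phi(x,\xi)$ for $t>0$. Oddness (H5) gives $\Phi(x,-\xi)=a(x,-\xi)\cdot(-\xi)=a(x,\xi)\cdot\xi=\Phi(x,\xi)$, so $\Phi(x,\cdot)$ is even and $p$-homogeneous. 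The bounds \eqref{cont.coer.phi} follow from coercivity, $\Phi(x,\xi)=a(x,\xi)\cdot\xi\ge\alpha|\xi|^p$, and from (H3) with Cauchy--Schwarz, $\Phi(x,\xi)\le|a(x,\xi)|\,|\xi|\le\beta|\xi|^p$. Strict convexity follows since $\nabla_\xi\Phi=p\,a(x,\cdot)$ is strictly monotone: for $\xi_1\ne\xi_2$ the left-hand side of (H8) is strictly positive, because $|\xi_1-\xi_2|^\gamma>0$ and $\Psi(x,\xi_1,\xi_2)=a(x,\xi_1)\cdot\xi_1+a(x,\xi_2)\cdot\xi_2\ge\alpha(|\xi_1|^p+|\xi_2|^p)>0$, and a $C^1$ function on $\R^N$ with strictly monotone gradient is strictly convex. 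Carath\'eodory regularity is clear: $\Phi(x,\cdot)$ is continuous, e.g.\ because $\Phi(x,\xi)=a(x,\xi)\cdot\xi$ with $a(x,\cdot)$ continuous, and $\Phi(\cdot,\xi)$ is measurable, either directly from $\Phi(x,\xi)=a(x,\xi)\cdot\xi$ and (H0) or from the sup-formula restricted to chains with rational vertices, which is a countable supremum of functions measurable in $x$. Finally, uniqueness: two potentials satisfying the conclusion have the same $\xi$-gradient on the connected set $\R^N$, hence differ by a function of $x$ alone, which vanishes by the normalization $\Phi(x,0)=0$.

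The main difficulty is really the passage from Rockafellar's inclusion to a genuine $C^1$ potential, i.e.\ the maximality argument together with the measurability bookkeeping in the sup-formula; once the existence of a potential is known, the $(p-1)$-homogeneity pins it down as $a(x,\xi)\cdot\xi$ and the rest is routine. In other words, the only genuinely nontrivial input is the cyclical monotonicity (H7).
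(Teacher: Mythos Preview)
The paper does not supply its own proof of this proposition: it is simply quoted as Lemma~3.3 of \cite{Con}, so there is nothing to compare against at the level of argument. Your proposal is a correct and self-contained reconstruction. The Rockafellar construction from (H7) gives a convex potential with $a(x,\cdot)\subset\partial\phi(x,\cdot)$; maximality of the continuous monotone map $a(x,\cdot)$ upgrades this to equality; single-valuedness of $\partial\phi$ on all of $\R^N$ then yields $\phi\in C^1$ with $\nabla_\xi\phi=a$. The homogeneity computation giving the closed form $\Phi(x,\xi)=a(x,\xi)\cdot\xi$ is particularly clean, since it makes the bounds $\alpha|\xi|^p\le\Phi\le\beta|\xi|^p$ and the Carath\'eodory property immediate from (H0), (H2), (H3). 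One small point worth making explicit: your derivation of $\phi(x,0)=0$ uses that $a(x,0)=0$, which follows from (H4) by letting $t\downarrow0$ in $a(x,t\xi)=t^{p-1}a(x,\xi)$ (here $p>1$); this also ensures the integral $\int_0^1 t^{p-1}\,dt$ in the homogeneity step is finite. Otherwise the argument is complete and matches the standard route in the literature.
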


Let us recall the definition of $G-$ and Mosco-convergence:

\begin{defn}\label{defigconv}
We say that the family of operators $\mathcal{A}_\ve u := -\diver(a_\ve(x,\nabla u))$
$G$-converges to $\mathcal{A}u:=-\diver(a(x,\nabla u))$ if for every $f\in
W^{-1,p'}(\Omega)$  and for every $f_\ve$ strongly convergent to $f$ in
$W^{-1,p'}(\Omega)$, the solutions $u^\ve$ of the problem
\begin{equation*}
\begin{cases}
-\diver(a_\ve(u^\ve,\nabla u^\ve))=f_\ve &\textrm{ in } \Omega \\
u^\ve=0 & \textrm{ on } \partial \Omega
\end{cases}
\end{equation*}
satisfy the following conditions
\begin{align*}
 u^\ve \cd u  &\qquad \mbox{ weakly in }  W^{1,p}_0(\Omega), \\
 a_\ve(x, \nabla u^\ve) \cd a(x,\nabla u)  &\qquad \mbox{ weakly in }  (L^{p}(\Omega))^n,
\end{align*}
where $u$ is the solution to the equation
\begin{equation*}
\begin{cases}
-\diver(a(x,\nabla u))=f & \textrm{ in } \Omega  \\
u=0 &\textrm{ on } \partial \Omega.
\end{cases}
\end{equation*}
\end{defn}

\begin{defn}
Let $X$ be a reflexive Banach space and  $F_j:X\to [0,+\infty]$ be a sequence of
functionals on $X$. Then, $F_j$ \textit{Mosco-converge} to $F$  if and only if the
following conditions hold:
 \begin{itemize}
\item Lower bound inequality: For every sequence $\{u_j\}_{j\ge 1}$ such that $u_j
    \cd u$ weakly in $X$ as $j\to\infty$, $$ F(u) \le \liminf_{j\to\infty}
    F_j(u_j).
    $$

    \item Upper bound inequality: For every $u\in X$, there exists  a sequence
        $\{u_j\}_{j\ge 1}$ such that $u_j \to u$ strongly in $X$ as $j\to\infty$
        such that
        $$ F(u) \ge
    \limsup_{j\to\infty} F_j(u_j).
    $$
\end{itemize}
\end{defn}

In the general case, one has the following results proved in \cite{Chi1, Con}
\begin{thm}[\cite{Chi1}, Theorem 4.1]\label{Gconv.ve}
Assume that $a_\ve(x,\xi)$ satisfies {\em (H1)--(H3)}. Then, up to a subsequence, $\mathcal{A}_\ve$ $G-$converges to a maximal monotone operator $\mathcal{A}$ whose coefficient $a(x,\xi)$ also satisfies {\em (H1)--(H3)}.
\end{thm}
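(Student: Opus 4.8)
The plan is to reproduce the classical compactness argument for $G$-convergence of monotone operators (the oscillating test function method of Murat--Tartar, adapted to the monotone nonlinear setting as in the references cited with the statement). Its ingredients are uniform a priori estimates, a diagonal extraction over a countable dense set of data, the div--curl (compensated compactness) lemma, a corrector construction, and Minty's monotonicity trick.

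First I would record the uniform estimates. For $f\in W^{-1,p'}(\Omega)$, monotonicity (H1), coercivity (H2), growth (H3) and the Browder--Minty theorem produce a solution $u^\ve\in W^{1,p}_0(\Omega)$ of $\mathcal A_\ve u^\ve=f$; testing with $u^\ve$ and using (H2) gives $\alpha\|\nabla u^\ve\|_{L^p(\Omega)}^p\le\langle f,u^\ve\rangle$, so $\{u^\ve\}$ is bounded in $W^{1,p}_0(\Omega)$ and, by (H3), $\{a_\ve(\cdot,\nabla u^\ve)\}$ is bounded in $L^{p'}(\Omega)^N$. Hence, along a subsequence, $u^\ve\cd u$ in $W^{1,p}_0(\Omega)$ and $a_\ve(\cdot,\nabla u^\ve)\cd\sigma$ in $L^{p'}(\Omega)^N$, with $-\diver\sigma=f$. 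A diagonal argument over a countable dense family $\{f_n\}\subset W^{-1,p'}(\Omega)$, together with these bounds, yields a single subsequence along which such weak limits exist for every datum.

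The heart of the matter is to identify $\sigma$ as $a(x,\nabla u)$ for a coefficient $a$ still satisfying (H1)--(H3). I would first build the candidate $a(x,\eta)$, $\eta\in\R^N$, by the oscillating test function method: on small balls $B$ with $\overline B\subset\Omega$ one solves localized problems with data built from $a_\ve(\cdot,\eta)$, extracts (using the diagonal subsequence and density in $\eta$) weak limits of the associated fluxes, and checks that, as an $x$-dependent vector field, the result does not depend on $B$ nor on the chosen datum, so that $a(\cdot,\eta)$ is well defined; measurability in $x$ and the bounds $\alpha|\eta|^p\le a(x,\eta)\cdot\eta$ and $|a(x,\eta)|\le\beta|\eta|^{p-1}$ are inherited from $a_\ve$ by weak lower semicontinuity, and the div--curl lemma applied to two such correctors gives monotonicity (H1) of $a(x,\cdot)$. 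To get $\sigma=a(x,\nabla u)$ one expands the monotonicity inequality $(a_\ve(\cdot,\nabla u^\ve)-a_\ve(\cdot,\nabla v^\ve))\cdot(\nabla u^\ve-\nabla v^\ve)\ge 0$ with $v^\ve$ the corrector of a piecewise-affine $\phi$, passing to the limit in the cross terms via the div--curl lemma (one factor a flux with fixed or cut-off divergence, the other a curl-free gradient) and in the diagonal terms using the equations; this yields $\int_\Omega(\sigma-a(x,\nabla\phi))\cdot(\nabla u-\nabla\phi)\,dx\ge 0$ first for piecewise-affine $\phi$ and, by density and continuity of the superposition map $\phi\mapsto a(\cdot,\nabla\phi)$, for all $\phi\in W^{1,p}_0(\Omega)$; Minty's trick (take $\phi=u-t\psi$ and let $t\downarrow 0$) then forces $\sigma=a(x,\nabla u)$ a.e. Finally $\eta\mapsto a(x,\eta)$ is single-valued and monotone, hence continuous, so $\mathcal A u=-\diver(a(x,\nabla u))$ is monotone, bounded, coercive and hemicontinuous, whence maximal monotone by Browder--Minty; by construction $\mathcal A_\ve$ $G$-converges to $\mathcal A$ along the chosen subsequence.

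The main obstacle is precisely this identification: proving that the weak limit of the fluxes is a genuine superposition (Nemytskii) function of the gradient of the weak limit of the solutions, with the correct measurability and structural properties, and that the coefficient so obtained is intrinsic, independent of the auxiliary localizations and data used to define it. This is where compensated compactness and the corrector construction are indispensable, the remaining steps being standard passages to the limit in inequalities.
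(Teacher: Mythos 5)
Your outline is essentially the argument of the cited source: the paper itself does not prove this theorem but imports it from \cite{Chi1}, and the proof there is exactly the compactness scheme you describe (uniform a priori bounds from (H1)--(H3), diagonal extraction over a countable dense set of data, identification of the limit flux via oscillating test functions, the div--curl lemma and Minty's trick, and verification that the limit coefficient is a Carath\'eodory function in the same class). The only caveat worth recording is that in the classical result the limit coefficient satisfies (H2)--(H3) with possibly degraded constants (the upper growth bound is not simply ``inherited by weak lower semicontinuity'' but comes from combining the coercivity and growth of the $a_\ve$), which is immaterial for the way the theorem is used in this paper.
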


Moreover,
\begin{thm}[\cite{Con}, Theorem 2.3]
If $\mathcal{A}_\ve u := -\diver(a_\ve(x,\nabla u))$ $G-$converges to $\mathcal{A}u:=-\diver(a(x,\nabla u))$ and $a_\ve(x,\xi)$ satisfies {\em (H0)--(H8)}, then $a(x,\xi)$ also satisfies {\em (H0)--(H8)}.
\end{thm}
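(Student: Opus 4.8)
The plan is to verify each of the conditions (H0)--(H8) for the $G$-limit coefficient $a(x,\xi)$ by combining the information already available from Theorem \ref{Gconv.ve} (which gives (H1)--(H3) for free, up to a subsequence, together with maximal monotonicity) with pointwise-convergence information for the approximating coefficients $a_\ve(x,\xi)$. The key device, following \cite{Con}, is to exploit the potential structure: by Proposition \ref{potential.f} each $a_\ve$ has a potential $\Phi_\ve(x,\xi)$ that is even, $p$-homogeneous, strictly convex, differentiable in $\xi$, with $\nabla_\xi\Phi_\ve=p\,a_\ve$ and $\alpha|\xi|^p\le\Phi_\ve(x,\xi)\le\beta|\xi|^p$. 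The uniform bounds let us pass (again up to a subsequence, diagonalizing over a countable dense set of $\xi$'s) to a limit function $\Phi$ and a limit vector field; the core of the argument is to identify this limit of the $\Phi_\ve$ with the potential of the $G$-limit operator $a(x,\xi)$, i.e. to show $\nabla_\xi\Phi=p\,a$. Once this identification is in place, (H4), (H5), (H7) and differentiability follow immediately because these are all properties that pass to limits of $p$-homogeneous even convex potentials: $p$-homogeneity and oddness are preserved under pointwise limits, and cyclical monotonicity (H7) is exactly the statement that $a$ is the gradient of a convex function, which holds because $\Phi$ is convex (a decreasing limit, or locally uniform limit on compact sets via the uniform bounds and equi-Lipschitz estimates, of convex functions is convex).

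The heart of the matter is the identification $\nabla_\xi\Phi = p\,a$, where $a$ is the $G$-limit coefficient. I would argue as follows. Fix $\xi_0\in\R^N$ and consider affine functions $\ell(x)=\xi_0\cdot x$; using test functions that agree with $\ell$ on a subdomain and the definition of $G$-convergence (Definition \ref{defigconv}) applied to suitable right-hand sides, one shows that $a_\ve(x,\xi_0)\rightharpoonup a(x,\xi_0)$ weakly in $L^{p'}$ for a.e. fixed $\xi_0$ --- this is the standard ``constant gradient'' characterization of $G$-limits of monotone operators. Simultaneously, since $\Phi_\ve(x,\xi_0)$ is bounded in $L^\infty$ uniformly, a subsequence converges weakly-$*$ to some $\Phi(\cdot,\xi_0)$, and by the integral formula $\Phi_\ve(x,\xi)=\int_0^1 \nabla_\xi\Phi_\ve(x,t\xi)\cdot\xi\,dt = p\int_0^1 a_\ve(x,t\xi)\cdot\xi\,dt$ together with the weak convergence of $a_\ve(x,t\xi_0)$, one gets $\Phi(x,\xi_0)=p\int_0^1 a(x,t\xi_0)\cdot\xi_0\,dt$, so that indeed $\nabla_\xi\Phi(x,\xi_0)=p\,a(x,\xi_0)$ after checking differentiability. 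Differentiability and strict convexity of $\Phi$ in $\xi$ follow from (H8) for the $a_\ve$: the strict monotonicity inequality (H8), which I will want to show also survives in the limit, forces the potential to be strictly convex, hence differentiable with a continuous gradient, which is precisely (H0) (Carath\'eodory regularity of $a$).

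For the two ``quantitative'' structural inequalities (H6) and (H8) --- equi-continuity and strict monotonicity --- the strategy is to pass them to the limit directly at the level of the coefficients. Both are of the form: a certain algebraic expression built from $a_\ve(x,\xi_1)$, $a_\ve(x,\xi_2)$, $\xi_1-\xi_2$ is controlled. For (H8), one uses that $a_\ve(x,\xi_i)\rightharpoonup a(x,\xi_i)$ weakly for fixed $\xi_i$ (by the constant-gradient argument above) together with the fact that $\Psi_\ve\to\Psi$ and lower semicontinuity of the relevant convex quantities under weak convergence, reducing (H8) for $a$ to a limit of (H8) for $a_\ve$; the power $1-(\gamma/p)$ with $\gamma=\max(2,p)$ is nonpositive when $p\le 2$ so some care with the sign of the exponent and with Fatou-type arguments is required. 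For (H6), a similar passage works, using H\"older's inequality to handle the product of the two factors raised to complementary-type powers $(p-1-\delta)/p$ and $\delta/p$. I expect the main obstacle to be exactly this last point: establishing (H6) and (H8) for the limit is more delicate than (H4)--(H5), (H7), because these inequalities are not simply ``closed under weak limits'' and require combining the weak convergence $a_\ve(\cdot,\xi_i)\rightharpoonup a(\cdot,\xi_i)$ with the strong convergence of solutions coming from $G$-convergence, plus convexity/lower-semicontinuity arguments; getting all the exponents to line up and the subsequence extractions to be compatible is where the real work lies. Most of this is already carried out in \cite{Con}, Theorem 2.3, so the write-up will largely be a matter of recalling and citing those estimates rather than reproving them from scratch.
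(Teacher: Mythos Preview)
The paper does not prove this theorem; it is quoted from \cite{Con}, Theorem 2.3, as a preliminary result and used without proof. So there is no ``paper's own proof'' to compare against --- the paper's approach is simply to cite the reference.

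That said, your sketch contains a genuine gap that would make the argument fail. You assert that for fixed $\xi_0$ one has $a_\ve(\cdot,\xi_0)\rightharpoonup a(\cdot,\xi_0)$ weakly in $L^{p'}$, calling this ``the standard constant-gradient characterization of $G$-limits''. This is false in general: $G$-convergence is \emph{not} pointwise-in-$\xi$ weak convergence of the coefficients. In the periodic case $a_\ve(x,\xi)=b(x/\ve,\xi)$ the weak-$*$ limit of $b(\cdot/\ve,\xi_0)$ is the average $\int_Y b(y,\xi_0)\,dy$, whereas the $G$-limit $a_{hom}(\xi_0)$ is determined by a cell problem and is typically different. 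What Definition \ref{defigconv} actually gives is $a_\ve(x,\nabla u_\ve)\rightharpoonup a(x,\nabla u)$ along \emph{solutions} $u_\ve$, whose gradients oscillate and are not equal to a fixed $\xi_0$. Consequently your integral-formula identification $\Phi(x,\xi_0)=p\int_0^1 a(x,t\xi_0)\cdot\xi_0\,dt$, obtained by passing the pointwise relation $\Phi_\ve(x,\xi_0)=p\int_0^1 a_\ve(x,t\xi_0)\cdot\xi_0\,dt$ to the weak limit, does not yield the potential of the $G$-limit, and the same objection blocks your proposed passage to the limit in (H6) and (H8) at fixed $\xi_1,\xi_2$.

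The argument in \cite{Con} goes instead through the equivalence between $G$-convergence of the operators $\mathcal{A}_\ve$ and Mosco-convergence of the integral functionals $u\mapsto\int_\Omega\Phi_\ve(x,\nabla u)\,dx$ (this is precisely the content of Lemma \ref{lema.mosco}); the structural properties of the limit density $\Phi$, and hence of $a=\tfrac{1}{p}\nabla_\xi\Phi$, are then read off from this variational characterization using corrector-type recovery sequences rather than constant-gradient test functions.
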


\begin{lema}[\cite{Con}, Lemma 4.2]\label{lema.mosco}
Given $a_{\ve}(x,\xi)$, $a(x,\xi)$ satisfying {\em (H0)--(H8)}, and $\Phi_\ve(x,\xi)$, $\Phi(x,\xi)$ given by Proposition \ref{potential.f}. Let  $F_{\ve}$ and $F$ be defined as
$$
F_{\ve}(u)=\begin{cases}
\int_{\Omega} \Phi_{\ve}(x,\nabla u)\, dx & \quad u\in W_0^{1,p}(\Omega), \\
+\infty & \text{otherwise},
\end{cases}
$$
$$
F(u)=\begin{cases}
\int_{\Omega} \Phi(x,\nabla u)\, dx & \quad u\in W_0^{1,p}(\Omega), \\
+\infty & \text{otherwise}.
\end{cases}
$$
If $\mathcal{A}_\ve$ $G-$converges to $\mathcal{A}$, then $F_{\ve}$ Mosco-converges to $F$.
\end{lema}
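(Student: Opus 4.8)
The plan is to extract both halves of the Mosco inequality directly from the $G$-convergence of the operators, by means of two structural observations. First, since each $\Phi_\ve(x,\cdot)$ is $p$-homogeneous and differentiable with $\nabla_\xi\Phi_\ve=p\,a_\ve$, Euler's identity $\nabla_\xi\Phi_\ve(x,\xi)\cdot\xi=p\,\Phi_\ve(x,\xi)$ gives the pointwise formula $\Phi_\ve(x,\xi)=a_\ve(x,\xi)\cdot\xi$, and likewise $\Phi(x,\xi)=a(x,\xi)\cdot\xi$; consequently
\[
F_\ve(v)=\int_\Omega a_\ve(x,\nabla v)\cdot\nabla v\,dx=\langle \A_\ve v,v\rangle,\qquad F(v)=\langle \A v,v\rangle,\qquad v\in W_0^{1,p}(\Omega),
\]
the brackets being the $W^{-1,p'}(\Omega)$--$W_0^{1,p}(\Omega)$ duality. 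Second, the Mosco-convergence is to be read in the ambient space $X=L^p(\Omega)$, on which $F_\ve$ is finite exactly on $W_0^{1,p}(\Omega)$: since $\Omega$ is bounded, $W_0^{1,p}(\Omega)\hookrightarrow L^p(\Omega)$ is compact with no hypothesis on $\partial\Omega$, so a weakly $L^p$-convergent sequence with bounded energy also converges weakly in $W_0^{1,p}(\Omega)$, whereas weak convergence in $W_0^{1,p}(\Omega)$ yields strong convergence in $X$ — and it is this last point that makes the natural recovery sequence admissible.

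For the lower bound inequality, take $u_\ve\rightharpoonup u$ in $X=L^p(\Omega)$; passing to a subsequence that realizes the $\liminf$ we may assume $F_\ve(u_\ve)$ converges, and when the limit is finite we have $u_\ve\in W_0^{1,p}(\Omega)$ with $\sup_\ve\|\nabla u_\ve\|_{L^p}<\infty$ by the coercivity bound $\alpha|\xi|^p\le\Phi_\ve(x,\xi)$, hence (along a further subsequence) $u_\ve\rightharpoonup u$ weakly in $W_0^{1,p}(\Omega)$. Put $f:=\A u=-\diver(a(x,\nabla u))\in W^{-1,p'}(\Omega)$ and let $w_\ve\in W_0^{1,p}(\Omega)$ solve $\A_\ve w_\ve=f$. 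By Definition~\ref{defigconv}, applied with $f_\ve\equiv f$, one has $w_\ve\rightharpoonup u$ weakly in $W_0^{1,p}(\Omega)$ — the limit being $u$ itself, since $\A$ is strictly monotone and therefore $\A v=\A u$ forces $v=u$. Using the convexity of $\Phi_\ve(x,\cdot)$,
\[
\Phi_\ve(x,\nabla u_\ve)\ \ge\ \Phi_\ve(x,\nabla w_\ve)+p\,a_\ve(x,\nabla w_\ve)\cdot(\nabla u_\ve-\nabla w_\ve),
\]
and integrating over $\Omega$: the first term equals $\langle\A_\ve w_\ve,w_\ve\rangle=\langle f,w_\ve\rangle\to\langle f,u\rangle=F(u)$, and the second equals $p\,\langle\A_\ve w_\ve,u_\ve-w_\ve\rangle=p\,\langle f,u_\ve-w_\ve\rangle\to 0$ because $u_\ve-w_\ve\rightharpoonup 0$ weakly in $W_0^{1,p}(\Omega)$. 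Hence $\liminf_\ve F_\ve(u_\ve)\ge F(u)$.

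For the upper bound inequality, fix $u\in W_0^{1,p}(\Omega)$ (if $u\notin W_0^{1,p}(\Omega)$ then $F(u)=+\infty$ and the constant sequence works). With $f:=\A u$ and $w_\ve\in W_0^{1,p}(\Omega)$ the solutions of $\A_\ve w_\ve=f$ as above, $G$-convergence gives $w_\ve\rightharpoonup u$ in $W_0^{1,p}(\Omega)$, hence $w_\ve\to u$ strongly in $X=L^p(\Omega)$ by compactness of the embedding; moreover $F_\ve(w_\ve)=\langle\A_\ve w_\ve,w_\ve\rangle=\langle f,w_\ve\rangle\to\langle f,u\rangle=\langle\A u,u\rangle=F(u)$. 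Thus $\{w_\ve\}$ is an admissible recovery sequence — in fact one along which $F_\ve(w_\ve)\to F(u)$ — which, together with the lower bound, yields the Mosco-convergence.

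The step I expect to be the main obstacle is not a single estimate but the right choice of auxiliary sequence: the whole argument rests on feeding the $G$-convergence with the \emph{specific} right-hand side $f=\A u$, so that the associated solutions $w_\ve$ converge back to $u$, which in turn uses that $\A v=\A u$ has only $v=u$ as solution — guaranteed by the strict monotonicity (H8) and by $a$ inheriting (H0)--(H8). The identity $F_\ve(\cdot)=\langle\A_\ve\cdot,\cdot\rangle$ is what turns energies into dualities that $G$-convergence actually controls; it is worth noting that the flux convergence $a_\ve(x,\nabla w_\ve)\rightharpoonup a(x,\nabla u)$ from Definition~\ref{defigconv} is not even needed here. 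The remaining points — sequences along which $F_\ve\equiv+\infty$, the case $u\notin W_0^{1,p}(\Omega)$, and the subsequence extractions in the lower bound — are routine.
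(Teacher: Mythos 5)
The paper does not actually prove this lemma: it is imported verbatim from \cite{Con}, Lemma 4.2, so there is no in-text argument to compare yours against, and your proposal has to stand on its own. It does. The two pillars of your argument are both sound: the Euler identity for the $p$-homogeneous potential, $\Phi_\ve(x,\xi)=a_\ve(x,\xi)\cdot\xi$, which turns the energies into the dualities $\langle \A_\ve v,v\rangle$ that $G$-convergence actually controls; and the use of the solutions $w_\ve$ of $\A_\ve w_\ve=\A u$ simultaneously as convexity comparison functions (lower bound) and as recovery sequence (upper bound). Existence of $w_\ve$ follows from Browder--Minty under (H1)--(H3), and the identification of its weak limit with $u$ uses that the $G$-limit inherits the strict monotonicity (H8), as you note. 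Your reading of the ambient space is also the right one: the ``$+\infty$ otherwise'' clause only makes sense if $F_\ve$ lives on a space strictly larger than $W^{1,p}_0(\Omega)$, and $X=L^p(\Omega)$ is exactly the form in which the lemma is invoked in Section 3 (recovery sequences $u_{\ve_j,i}\rightharpoonup u_i$ in $W^{1,p}_0(\Omega)$ with $u_{\ve_j,i}\to u_i$ in $L^p(\Omega)$; lower bound applied along weakly $W^{1,p}_0$-convergent sequences). The only caveat worth recording is that if one insisted on Mosco convergence in $W^{1,p}_0(\Omega)$ itself, your recovery sequence would additionally need strong $W^{1,p}_0$ convergence, which would require a further argument combining (H8) with the flux convergence $a_\ve(x,\nabla w_\ve)\rightharpoonup a(x,\nabla u)$; nothing in this paper uses that stronger version, so your proof suffices for every application made of the lemma here.
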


\subsection{Oscillatory integrals}

The proof of the main Theorem makes use of some results on convergence of oscillatory integrals.
In the case of periodic oscillations, the result needed here was proved in  \cite{FBPS}. In fact, in  \cite{FBPS} the following result is proved.

\begin{thm}[\cite{FBPS}, Lemma 3.3] 
Let $\Omega\subset \R^N$ be a bounded domain and denote by $Q$ the unit cube in
$\R^N$. Let $g\in L^\infty(\R^N)$ be a $Q$-periodic function such that $\bar g=0$.
Then the inequality
$$
\left| \int_{\Omega} g(\tfrac{x}{\ve})u \, dx\right| \leq \|g\|_{L^\infty(\R^N)} c_1\ve\|\nabla u\|_{L^1(\Omega)}
$$
holds for every $u\in W^{1,1}_0(\Omega)$, where $c_1$  is the optimal constant in Poincar\'e's inequality in $L^1(Q)$.
\end{thm}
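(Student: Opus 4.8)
The plan is to deduce the estimate from an integration by parts, after representing the mean-zero periodic function $g$ as the divergence of a bounded $Q$-periodic vector field whose $L^\infty$ norm is controlled sharply by $c_1\|g\|_{L^\infty(\R^N)}$. By density of $C_c^\infty(\Omega)$ in $W_0^{1,1}(\Omega)$ and since $g(\cdot/\ve)\in L^\infty$, it is enough to prove the inequality for $u\in C_c^\infty(\Omega)$, extended by zero to $\R^N$.

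The heart of the argument is the following claim: there exists a $Q$-periodic $G\in L^\infty(\R^N;\R^N)$ with $\diver G=g$ in $\mathcal D'(\R^N)$ and $\|G\|_{L^\infty}\le c_1\|g\|_{L^\infty}$. Because $\bar g=0$, the linear functional $\varphi\mapsto\int_Q g\varphi\,dx$ on $C^\infty_{\mathrm{per}}(Q)$ depends on $\varphi$ only modulo constants, and on that quotient $\|\nabla\varphi\|_{L^1(Q)}$ is a genuine norm by Poincar\'e's inequality in $L^1(Q)$; moreover
$$\left|\int_Q g\varphi\,dx\right|=\left|\int_Q g(\varphi-\bar\varphi)\,dx\right|\le\|g\|_{L^\infty}\|\varphi-\bar\varphi\|_{L^1(Q)}\le c_1\|g\|_{L^\infty}\|\nabla\varphi\|_{L^1(Q)}.$$
Hence $\nabla\varphi\mapsto-\int_Q g\varphi\,dx$ is a bounded functional of norm $\le c_1\|g\|_{L^\infty}$ on the subspace $\{\nabla\varphi:\varphi\in C^\infty_{\mathrm{per}}(Q)\}\subset L^1(Q;\R^N)$; extending it by Hahn--Banach to all of $L^1(Q;\R^N)$ with the same norm and representing it by some $G\in L^\infty(Q;\R^N)$ with $\|G\|_{L^\infty}\le c_1\|g\|_{L^\infty}$, extended $Q$-periodically, we get $\int_Q G\cdot\nabla\varphi\,dx=-\int_Q g\varphi\,dx$ for every $\varphi\in C^\infty_{\mathrm{per}}(Q)$, i.e. $\diver G=g$.

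With $G$ in hand, a change of variables gives $\diver_x\big(G(x/\ve)\big)=\ve^{-1}g(x/\ve)$ in $\mathcal D'(\R^N)$, so that for $u\in C_c^\infty(\Omega)$,
$$\int_\Omega g(\tfrac{x}{\ve})u\,dx=\ve\int_{\R^N}\diver_x\big(G(\tfrac{x}{\ve})\big)u\,dx=-\ve\int_{\R^N}G(\tfrac{x}{\ve})\cdot\nabla u\,dx,$$
whence $\left|\int_\Omega g(\tfrac{x}{\ve})u\,dx\right|\le\ve\|G\|_{L^\infty}\|\nabla u\|_{L^1(\Omega)}\le c_1\ve\|g\|_{L^\infty}\|\nabla u\|_{L^1(\Omega)}$. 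Approximating a general $u\in W_0^{1,1}(\Omega)$ by $C_c^\infty(\Omega)$ functions in $W^{1,1}$ and passing to the limit (both sides are continuous in the $W^{1,1}$ topology, using $g(\cdot/\ve),G(\cdot/\ve)\in L^\infty$) finishes the proof.

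The main obstacle is the construction in the second paragraph: producing a \emph{bounded} primitive of the bounded periodic function $g$, with the \emph{optimal} constant $c_1$. Existence of an $L^\infty$ primitive of an $L^\infty$ datum is delicate in general, but the Hahn--Banach/duality argument yields it directly here, and the sharp Poincar\'e constant enters precisely as the norm of the functional $\varphi\mapsto\int_Q g\varphi\,dx$ on periodic gradients; the one point that must be handled carefully is quotienting out the constants, which is exactly where $\bar g=0$ is used so that $\|\nabla\varphi\|_{L^1(Q)}$ becomes a norm. Everything else (the scaling identity, the integration by parts, the density step) is routine.
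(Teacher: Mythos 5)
Your argument is correct, but it is not the proof this statement is usually given (the present paper does not prove it at all; it quotes it from \cite{FBPS}, where the proof is the standard tiling argument). That proof partitions $\R^N$ into the cubes $\ve(k+Q)$, $k\in\Z^N$, extends $u$ by zero, uses $\bar g=0$ to replace $u$ by $u$ minus its average on each cell, applies the Poincar\'e--Wirtinger inequality cell by cell (its constant on a cube of side $\ve$ rescales to $c_1\ve$), and sums; the factor $c_1\ve$ comes precisely from that rescaling. You instead produce a $Q$-periodic $G\in L^\infty(\R^N;\R^N)$ with $\diver G=g$ and $\|G\|_{L^\infty}\le c_1\|g\|_{L^\infty}$ via Hahn--Banach and $L^1$--$L^\infty$ duality on the subspace of periodic gradients, and the factor $\ve$ then comes from the chain rule in the integration by parts. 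Your steps all check out: well-definedness of the functional modulo constants is exactly where $\bar g=0$ enters; the extension has the right dual norm; the periodization of test functions shows $\diver G=g$ in $\mathcal{D}'(\R^N)$ after periodic extension; and the density step is routine. What each approach buys: the tiling proof is elementary and constructive, but needs the Poincar\'e constant for all of $W^{1,1}(Q)$ (since $u$ restricted to a cell is not periodic); your duality proof is non-constructive, but it correctly addresses the real obstruction you flag (for merely bounded $g$ the Newtonian potential only gives a $\mathrm{BMO}$ gradient, so some functional-analytic device is genuinely needed to get a bounded primitive), and it yields a constant controlled by the optimal Poincar\'e constant over \emph{periodic} functions, which is at most $c_1$, so the stated inequality follows and is, if anything, slightly sharpened.
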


For $v\in W^{1,p}_0(\Omega)$ if one applies the previous result to $u=|v|^p$ the next corollary is obtained.
\begin{cor}[\cite{FBPS}, Theorem 3.4]\label{lema.clave}
Under the same assumptions of the previous result, for any $v\in W^{1,p}_0(\Omega)$ it holds,
$$
\left| \int_{\Omega} g(\tfrac{x}{\ve})|v|^p \, dx\right| \leq \|g\|_{L^\infty(\R^N)} p c_1\ve\|v\|_{L^p(\Omega)}^{p-1} \|\nabla v\|_{L^{p}(\Omega)}.
$$
\end{cor}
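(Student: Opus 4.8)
The plan is to apply the preceding theorem (\cite{FBPS}, Lemma 3.3) with the test function $u=|v|^p$, so the only real work is (a) checking that $u=|v|^p$ is an admissible test function, i.e.\ that $|v|^p\in W^{1,1}_0(\Omega)$ whenever $v\in W^{1,p}_0(\Omega)$, together with the chain-rule formula for its weak gradient, and (b) estimating $\|\nabla(|v|^p)\|_{L^1(\Omega)}$. For point (a): if $v\in C_c^\infty(\Omega)$ this is elementary, with $\nabla(|v|^p)=p\,|v|^{p-2}v\,\nabla v$ (interpreting $|v|^{p-2}v$ as $0$ on $\{v=0\}$, which is harmless since $\nabla v=0$ a.e.\ there). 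For a general $v\in W^{1,p}_0(\Omega)$ one takes $v_n\in C_c^\infty(\Omega)$ with $v_n\to v$ in $W^{1,p}(\Omega)$ and uses the Hölder estimate of the next paragraph to see that $\{|v_n|^p\}$ is Cauchy in $W^{1,1}(\Omega)$; its limit is $|v|^p$, each $|v_n|^p$ has compact support, so $|v|^p\in W^{1,1}_0(\Omega)$ and the same chain-rule identity persists a.e.

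Granting this, apply Hölder's inequality with conjugate exponents $p/(p-1)$ and $p$:
$$
\|\nabla(|v|^p)\|_{L^1(\Omega)}=p\int_\Omega |v|^{p-1}|\nabla v|\,dx\le p\left(\int_\Omega |v|^p\,dx\right)^{\frac{p-1}{p}}\left(\int_\Omega |\nabla v|^p\,dx\right)^{\frac1p}=p\,\|v\|_{L^p(\Omega)}^{p-1}\,\|\nabla v\|_{L^p(\Omega)}.
$$
Now feed $u=|v|^p\in W^{1,1}_0(\Omega)$ into the theorem: since $g\in L^\infty(\R^N)$ is $Q$-periodic with $\bar g=0$,
$$
\left|\int_\Omega g(\tfrac{x}{\ve})\,|v|^p\,dx\right|\le \|g\|_{L^\infty(\R^N)}\,c_1\,\ve\,\|\nabla(|v|^p)\|_{L^1(\Omega)}\le \|g\|_{L^\infty(\R^N)}\,p\,c_1\,\ve\,\|v\|_{L^p(\Omega)}^{p-1}\,\|\nabla v\|_{L^p(\Omega)},
$$
which is exactly the asserted inequality.

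The argument is thus a single application of the cited theorem followed by Hölder's inequality; the one point that is not completely immediate is the density/chain-rule justification in step (a), and I expect that to be the (mild) main obstacle. It is dispatched by the standard approximation scheme sketched above, and one may alternatively quote the chain rule for superpositions $\phi(v)$ with $\phi\in C^1(\R)$, $\phi(0)=0$, applied to $\phi(t)=|t|^p$.
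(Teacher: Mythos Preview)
Your proof is correct and follows exactly the approach indicated in the paper, which simply says ``for $v\in W^{1,p}_0(\Omega)$ if one applies the previous result to $u=|v|^p$ the next corollary is obtained.'' You have in fact supplied more detail than the paper does, carefully justifying that $|v|^p\in W^{1,1}_0(\Omega)$ via approximation and the chain rule before invoking H\"older's inequality; the paper leaves all of this implicit.
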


In the general case, when no periodicity is assumed, one cannot have a rate of convergence. Nevertheless the following result holds.
\begin{thm}\label{no.oscilatorio}
Let $\Omega\subset \R^N$ be a bounded domain and let $\rho_\ve, \rho\in L^\infty(\Omega)$ be such that $\rho_\ve\rightharpoonup \rho$ *-weakly in $L^\infty(\Omega)$. Let $K\subset L^1(\Omega)$ be a compact set. Then, 
$$
\lim_{\ve\to 0} \sup_{v\in K} \int_\Omega (\rho_\ve - \rho) v\, dx = 0.
$$
\end{thm}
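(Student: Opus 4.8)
The plan is to exploit the equicontinuity-type characterization of convergence on compact sets: a sequence of continuous linear functionals converging pointwise on a Banach space converges uniformly on compact subsets. Concretely, define the functionals $T_\ve\colon L^1(\Omega)\to\R$ by $T_\ve(v) = \int_\Omega(\rho_\ve-\rho)v\,dx$. Each $T_\ve$ is bounded on $L^1(\Omega)$ with $\|T_\ve\|_{(L^1)^*} \le \|\rho_\ve-\rho\|_{L^\infty} \le \sup_\ve\|\rho_\ve\|_{L^\infty} + \|\rho\|_{L^\infty} =: M < \infty$, where the uniform bound on $\|\rho_\ve\|_{L^\infty}$ comes from the uniform boundedness principle applied to the weakly-$*$ convergent sequence $\rho_\ve$ in $L^\infty(\Omega) = (L^1(\Omega))^*$. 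Thus $\{T_\ve\}$ is an equibounded family in $(L^1(\Omega))^*$, hence equi-Lipschitz: $|T_\ve(v)-T_\ve(w)| \le M\|v-w\|_{L^1}$ for all $v,w$ and all $\ve$.

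Next I would record the pointwise convergence: for each fixed $v\in L^1(\Omega)$, the definition of weak-$*$ convergence $\rho_\ve\cde\rho$ in $L^\infty(\Omega)$ against the test function $v\in L^1(\Omega)$ gives exactly $T_\ve(v)\to 0$ as $\ve\to 0$. With equi-Lipschitz continuity and pointwise convergence in hand, the uniform convergence on the compact set $K$ follows from a standard $\ve/3$ argument. Given $\eta>0$, cover $K$ by finitely many balls $B(v_1,r),\dots,B(v_m,r)$ in $L^1(\Omega)$ with $r = \eta/(3M)$; for each $v\in K$ pick $v_j$ with $\|v-v_j\|_{L^1} < r$, so that $|T_\ve(v)| \le |T_\ve(v)-T_\ve(v_j)| + |T_\ve(v_j)| \le Mr + |T_\ve(v_j)| < \eta/3 + |T_\ve(v_j)|$. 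Since there are finitely many centers, choose $\ve_0$ so that $|T_\ve(v_j)| < \eta/3$ for all $j$ and all $\ve < \ve_0$; then $\sup_{v\in K}|T_\ve(v)| < \eta$ for $\ve<\ve_0$. As $\eta$ was arbitrary, $\sup_{v\in K}\int_\Omega(\rho_\ve-\rho)v\,dx \to 0$.

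The only genuinely substantive point — and the one I would be careful to state explicitly — is the uniform bound $\sup_\ve\|\rho_\ve\|_{L^\infty} < \infty$; without it the family $\{T_\ve\}$ need not be equicontinuous and the compactness of $K$ alone would not suffice. This is precisely where weak-$*$ convergence (which forces $\{\rho_\ve\}$ to be bounded in $L^\infty$, by the Banach--Steinhaus theorem on the predual $L^1(\Omega)$) does real work, as opposed to mere pointwise a.e.\ convergence. Everything else is the routine "equicontinuity plus pointwise convergence implies uniform convergence on compacta" principle, and no hypothesis on $\partial\Omega$ or on the structure of $\rho_\ve$ beyond the stated weak-$*$ convergence is needed.
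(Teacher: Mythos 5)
Your proof is correct and follows essentially the same route as the paper: a finite $r$-net of the compact set $K$, pointwise convergence of $\int_\Omega(\rho_\ve-\rho)v_i\,dx$ at the net points from the weak-$*$ hypothesis, and the uniform bound $M$ on $\|\rho_\ve\|_{L^\infty}+\|\rho\|_{L^\infty}$ to control the error off the net. The only difference is cosmetic: you justify the uniform bound explicitly via Banach--Steinhaus, which the paper simply takes for granted.
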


\begin{proof}
Given $r>0$, there exists $\{v_i\}_{i=1}^J\subset K$ such that $K\subset \cup_{i=1}^J B_r(v_i)$. By hypotheses, it holds that
$$
\lim_{\ve\to 0} \max_{1\le i\le J} \int_\Omega (\rho_\ve - \rho) v_i\, dx = 0.
$$

Let now $v_\ve\in K$ be such that
$$
\sup_{v\in K}\int_\Omega (\rho_\ve - \rho) v\, dx \le \int_\Omega (\rho_\ve - \rho) v_\ve \, dx + \ve. 
$$
Hence, there exists $i_\ve\in \{1,\dots,J\}$ such that $v_\ve\in B_r(v_{i_\ve})$. Now
\begin{align*}
\int_\Omega (\rho_\ve - \rho) v_\ve\, dx &= \int_\Omega (\rho_\ve - \rho) v_{i_\ve}\, dx + \int_\Omega (\rho_\ve - \rho) (v_\ve - v_{i_\ve})\, dx\\
&\le \max_{1\le i\le J} \int_\Omega (\rho_\ve - \rho) v_i\, dx + M r,
\end{align*}
where $M$ is a bound on $\|\rho_\ve\|_\infty + \|\rho\|_\infty$. Therefore
$$
\limsup_{\ve\to 0}\sup_{v\in K}\int_\Omega (\rho_\ve - \rho) v\, dx  \le Mr.
$$
Since $r>0$ is arbitrary, the result follows.
\end{proof}

In our application of Theorem \ref{no.oscilatorio}, the compact set will be a bounded set in $W^{1,1}_0(\Omega)$. So we have
\begin{cor}
Under the same hypotheses of the previous theorem, we have
$$
\lim_{\ve\to 0} \sup_{v\in W^{1,1}_0(\Omega),\atop \|\nabla v\|_1\le 1} \int_\Omega (\rho_\ve - \rho) v\, dx = 0.
$$
\end{cor}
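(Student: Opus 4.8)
The plan is to reduce this corollary to Theorem \ref{no.oscilatorio} by choosing for $K$ (the $L^1$-closure of) the unit ball
$$
\mathcal{B}:=\{v\in W^{1,1}_0(\Omega):\ \|\nabla v\|_{L^1(\Omega)}\le 1\}.
$$
The only thing that needs to be checked is that $\mathcal{B}$ is precompact in $L^1(\Omega)$; once this is known, we set $K:=\overline{\mathcal{B}}^{\,L^1(\Omega)}$, which is then compact, apply Theorem \ref{no.oscilatorio} to this $K$, and use $\mathcal{B}\subset K$ together with monotonicity of the supremum to conclude.

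To prove precompactness of $\mathcal{B}$ in $L^1(\Omega)$, I would proceed in two steps. First, since $\Omega$ is bounded, Poincar\'e's inequality in $W^{1,1}_0(\Omega)$ gives a constant $C=C(\Omega)$ with $\|v\|_{L^1(\Omega)}\le C\|\nabla v\|_{L^1(\Omega)}\le C$ for every $v\in\mathcal{B}$; hence $\mathcal{B}$ is bounded in $W^{1,1}_0(\Omega)$. Second, because no regularity is assumed on $\partial\Omega$, I would not invoke Rellich--Kondrachov directly on $\Omega$, but instead fix a ball $B\supset\Omega$ and extend every $v\in\mathcal{B}$ by zero to $\tilde v\in W^{1,1}_0(B)$, with $\|\tilde v\|_{W^{1,1}(B)}=\|v\|_{W^{1,1}(\Omega)}$ bounded uniformly. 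Since $1<\tfrac{N}{N-1}$, the embedding $W^{1,1}_0(B)\hookrightarrow L^1(B)$ is compact, so $\{\tilde v:\ v\in\mathcal{B}\}$ is precompact in $L^1(B)$; restricting to $\Omega$ shows $\mathcal{B}$ is precompact in $L^1(\Omega)$.

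With $K=\overline{\mathcal{B}}^{\,L^1(\Omega)}$ compact in $L^1(\Omega)$, Theorem \ref{no.oscilatorio} yields
$$
\lim_{\ve\to 0}\ \sup_{v\in K}\int_\Omega(\rho_\ve-\rho)v\,dx=0,
$$
and since
$$
\sup_{v\in W^{1,1}_0(\Omega),\ \|\nabla v\|_{L^1}\le 1}\int_\Omega(\rho_\ve-\rho)v\,dx
=\sup_{v\in\mathcal{B}}\int_\Omega(\rho_\ve-\rho)v\,dx
\le \sup_{v\in K}\int_\Omega(\rho_\ve-\rho)v\,dx,
$$
the claim follows. The only genuinely delicate point is the compactness of $\mathcal{B}$ under the standing assumption that $\partial\Omega$ is irregular, and this is handled precisely by the zero-extension to a ball; everything else is routine.
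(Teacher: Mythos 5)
Your proposal is correct and follows exactly the route the paper intends: the paper simply remarks that the relevant compact set is a bounded set of $W^{1,1}_0(\Omega)$ and invokes Theorem \ref{no.oscilatorio}, and you supply the missing justification (Poincar\'e plus zero-extension to a ball to sidestep the lack of boundary regularity, then Rellich--Kondrachov). The only point left tacit is that the supremum over $\mathcal{B}$ is nonnegative (e.g.\ since $0\in\mathcal{B}$), so the one-sided bound by $\sup_{v\in K}$ indeed forces the limit to be zero.
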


Finally, when one consider bounded sets in $W^{1,p}_0(\Omega)$ the following analog of Corollary \ref{lema.clave} holds
\begin{cor}\label{lema.clave.2}
Under the same assumptions of Theorem \ref{no.oscilatorio}, 
$$
\left|\int_\Omega (\rho_\ve-\rho) |v|^p\, dx\right|\le o(1) \|v\|_p^{p-1} \|\nabla v\|_p.
$$
\end{cor}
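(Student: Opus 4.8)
The plan is to deduce this estimate from the Corollary just proved for $W^{1,1}_0(\Omega)$, applied to the function $|v|^p$, in the same way Corollary \ref{lema.clave} is deduced from its $W^{1,1}$ predecessor in the periodic setting of \cite{FBPS}. First I would record that, for $v\in W^{1,p}_0(\Omega)$, the function $w:=|v|^p$ belongs to $W^{1,1}_0(\Omega)$ with weak gradient $\nabla w = p\,|v|^{p-2}v\,\nabla v$: this is the chain rule for a Sobolev function composed with the locally Lipschitz map $t\mapsto |t|^p$, which vanishes at the origin, so the zero boundary value of $v$ is inherited by $w$. Membership $w\in L^1(\Omega)$ is immediate from $v\in L^p(\Omega)$, and by Hölder's inequality with conjugate exponents $p/(p-1)$ and $p$,
$$
\|\nabla w\|_{L^1(\Omega)}=p\int_\Omega |v|^{p-1}|\nabla v|\,dx\le p\,\|v\|_{L^p(\Omega)}^{p-1}\,\|\nabla v\|_{L^p(\Omega)}.
$$

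Next I would set
$$
\eta(\ve):=\sup_{u\in W^{1,1}_0(\Omega),\ \|\nabla u\|_1\le 1}\ \left|\int_\Omega (\rho_\ve-\rho)\,u\,dx\right|.
$$
By the Corollary immediately preceding the present one (the $W^{1,1}_0$ form of Theorem \ref{no.oscilatorio}), $\eta(\ve)\to 0$ as $\ve\downarrow 0$; the absolute value inside the supremum costs nothing because the admissible class is symmetric under $u\mapsto -u$, and, decisively, $\eta(\ve)$ depends only on the pair $(\rho_\ve,\rho)$ and not on $v$.

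Finally, fix $v\in W^{1,p}_0(\Omega)$. The inequality is trivial when $v\equiv 0$, so assume $\|\nabla w\|_{L^1(\Omega)}\neq 0$ and apply the defining bound of $\eta(\ve)$ to the admissible function $u:=w/\|\nabla w\|_{L^1(\Omega)}$. This yields
$$
\left|\int_\Omega (\rho_\ve-\rho)\,|v|^p\,dx\right|\le \eta(\ve)\,\|\nabla w\|_{L^1(\Omega)}\le p\,\eta(\ve)\,\|v\|_{L^p(\Omega)}^{p-1}\,\|\nabla v\|_{L^p(\Omega)},
$$
and since $p\,\eta(\ve)=o(1)$ as $\ve\downarrow 0$ uniformly in $v$, the claim follows. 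The only point requiring care is exactly this uniformity: one must use the supremum formulation of the preceding Corollary rather than a merely pointwise weak-$*$ convergence, and exploit the homogeneity of the inequality under $v\mapsto tv$ to reduce to a normalized test function. Beyond that the argument is entirely routine.
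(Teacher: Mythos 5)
Your proof is correct and follows exactly the route the paper intends: the paper leaves this corollary unproved but indicates (via the derivation of Corollary \ref{lema.clave} from its $W^{1,1}$ predecessor) that one applies the preceding $W^{1,1}_0$ corollary to $w=|v|^p$, using the chain rule and H\"older to bound $\|\nabla w\|_{L^1}$ by $p\|v\|_p^{p-1}\|\nabla v\|_p$. Your attention to the uniformity of the $o(1)$ in $v$ and to the normalization by $\|\nabla w\|_{L^1}$ is precisely the point that makes the statement meaningful.
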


\subsection{Eigenvalues of quasilinear operators}

We refer the interested reader to the survey  \cite{FBPS1} for details, only the facts
that will be used below are stated here.

In this subsection we state some results for the eigenvalue problem \eqref{1.1} for fixed $\ve>0$. Than is, we analyze the problem
\begin{equation}\label{eigen.problem}
\begin{cases}
-\diver(a(x,\nabla u)) = \lambda \rho(x) |u|^{p-2}u & \text{in }\Omega\\
u=0 & \text{on }\partial\Omega,
\end{cases}
\end{equation}
where $\Omega$ is a bounded open set in $\R^N$.

We assume that $a(x,\xi)$ satisfies (H0)--(H8) of the previous subsection, and as a consequence, there exists a potential funciont $\Phi(x,\xi)$ given by Proposition \ref{potential.f}. 

By using the Ljusternik-Schnirelmann theory, if $\rho^+\neq 0$, one can construct a sequence of (variational) eigenvalues of \eqref{eigen.problem} as 
\begin{equation}\label{variac}
\lam_{k}^+ = \inf_{C\in \mathcal{C}_k} \sup_{v\in C}
\frac{ \int_\Omega \Phi(x,\nabla v) \, dx}{ \int_\Omega \rho(x) |v|^p\, dx }
\end{equation}
where
\begin{align*}
\mathcal{C}_k & = \{ C \subset M^+ \colon C  \text{ is compact and symmetric, } \gamma(C) \ge k \},\\
M^+ & = \big\{u \in W^{1,p}_0(\Omega)\colon \int_{\Omega} \rho(x) |u|^p\, dx > 0 \big\},
\end{align*}
and $\gamma \colon \Sigma \to \N \cup \{\infty\}$ is the  Krasnoselskii genus, see \cite{Rab2},
$$
\gamma(A) = \min\{ k \in \N \colon \text{ there exists } f \in C(A, \R^k\setminus \{0\}), \ f(x) = -f(-x) \}.
$$
Of course, when $\rho^-\neq 0$ one can construct a seqence of negative eigenvalues in a complete analogous way changing $M^+$ by $M^-$ given by
$$
M^- = \big\{u \in W^{1,p}_0(\Omega)\colon \int_{\Omega} \rho(x) |u|^p\, dx < 0 \big\}.
$$

It is customary to reformulate \eqref{variac} as
$$  
\frac{1}{\lambda_k^+} = \sup_{C\in \mathcal{C}_k} \inf_{v\in C} \frac{ \int_\Omega \rho(x) |v|^p\, dx }{ \int_\Omega \Phi(x,\nabla v)\, dx},
$$
and due to the homogeneity condition (H4), we will use also the following equivalent characterization for the eigenvalues:
$$
\frac{1}{\lambda_k^+} = \sup_{C\in \mathcal{D}_k} \inf_{u\in C} \int_{\Omega} \rho(x) |u|^p dx,
$$
where
$$
\mathcal{D}_k=\{C \subset S\colon \text{ compact and symmetric with } \gamma(C)\ge k, \mbox{ and } 0\not\in C\},
$$
$$
S = M^+ \cap \left\{ u \in W_0^{1,p}(\Omega) \colon \int_{\Omega}\Phi(x,\nabla u)\, dx= 1\right\}.
$$

The following useful Sturm-type theorem will be needed later:

\begin{thm} \label{sturmian}
Let $\Omega_1\subset \Omega_2\subset \R^N$, and let  $\{\lam_{k, i}^{+}\}_{k\ge 1}$
be the eigenvalues given by \eqref{variac} in $\Omega_i$, $i=1,2$, respectively. Then,
$$
\lam_{k,2}^{+} \le \lam_{k,1}^{+}
$$
for any $k\ge 1$.

Moreover, let $\rho_1(x) \le \rho_2(x)$ a.e. $x\in \Omega$ and $\Phi_1(x,\xi)\ge \Phi_2(x,\xi)$ a.e. $x\in \Omega$, for every $\xi\in \R^N$. Then, if $\{\lam_{k,i}^{+}\}_{k\ge 1}$ are the eigenvalues given by \eqref{variac} with weight $\rho_i$ and potential $\Phi_i$, $i=1,2$ respectively, then,
$$
\lam_{k,2}^{+} \le \lam_{k,1}^{+}.
$$
\end{thm}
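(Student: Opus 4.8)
The plan is to read off both inequalities directly from the minimax formula \eqref{variac}, by showing that every admissible competitor set for the ``smaller'' problem produces a competitor for the ``larger'' problem whose Rayleigh quotient is no larger.

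\emph{Domain monotonicity.} Fix $\Omega_1\subset\Omega_2$ and let $\iota\colon W^{1,p}_0(\Omega_1)\to W^{1,p}_0(\Omega_2)$ be extension by zero. This is a linear isometric embedding, hence an odd homeomorphism onto its image, and for $u\in W^{1,p}_0(\Omega_1)$ one has $\int_{\Omega_2}\Phi(x,\nabla\iota u)\,dx=\int_{\Omega_1}\Phi(x,\nabla u)\,dx$ and $\int_{\Omega_2}\rho|\iota u|^p\,dx=\int_{\Omega_1}\rho|u|^p\,dx$; in particular $\iota$ maps $M^+(\Omega_1)$ into $M^+(\Omega_2)$. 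Since the Krasnoselskii genus is invariant under odd homeomorphisms, any $C\in\mathcal{C}_k$ for $\Omega_1$ is sent to a compact symmetric set $\iota(C)\subset M^+(\Omega_2)$ with $\gamma(\iota(C))=\gamma(C)\ge k$, i.e. $\iota(C)\in\mathcal{C}_k$ for $\Omega_2$, and by the two displayed identities $\sup_{v\in C}\frac{\int_{\Omega_1}\Phi(x,\nabla v)\,dx}{\int_{\Omega_1}\rho|v|^p\,dx}=\sup_{w\in\iota(C)}\frac{\int_{\Omega_2}\Phi(x,\nabla w)\,dx}{\int_{\Omega_2}\rho|w|^p\,dx}$ (these suprema are finite because $C$ is compact and $v\mapsto\int\rho|v|^p$ is continuous and strictly positive, hence bounded away from $0$, on $C$). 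Taking the infimum in \eqref{variac} for $\Omega_2$ over the subfamily $\{\iota(C)\}\subset\mathcal{C}_k$ therefore gives a value that is at most $\lambda_{k,1}^+$, so $\lambda_{k,2}^+\le\lambda_{k,1}^+$.

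\emph{Coefficient monotonicity.} Now the domain $\Omega$ is fixed. From $\rho_1\le\rho_2$ a.e. we get $\int_\Omega\rho_1|u|^p\,dx\le\int_\Omega\rho_2|u|^p\,dx$ for all $u\in W^{1,p}_0(\Omega)$, hence $M_1^+\subset M_2^+$ and consequently the admissible class for the first problem is contained in that for the second. Moreover, for $v\in M_1^+$ we have $0<\int_\Omega\rho_1|v|^p\,dx\le\int_\Omega\rho_2|v|^p\,dx$, while $\Phi_2\le\Phi_1$ gives $\int_\Omega\Phi_2(x,\nabla v)\,dx\le\int_\Omega\Phi_1(x,\nabla v)\,dx$, so
\[
\frac{\int_\Omega\Phi_2(x,\nabla v)\,dx}{\int_\Omega\rho_2(x)|v|^p\,dx}\le\frac{\int_\Omega\Phi_1(x,\nabla v)\,dx}{\int_\Omega\rho_1(x)|v|^p\,dx}.
\]
Taking $\sup_{v\in C}$ of both sides and then $\inf_{C}$ over the (smaller) admissible class of the first problem, and finally enlarging that class to the admissible class of the second problem in the left-hand infimum, yields $\lambda_{k,2}^+\le\lambda_{k,1}^+$.

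The argument has no deep obstacle; the two points needing attention are (i) that extension by zero preserves the genus, which is immediate from invariance of $\gamma$ under odd homeomorphisms together with $\iota$ being a linear isometric embedding, and (ii) that the Rayleigh quotients are genuinely well defined on each competitor set, which holds since $C$ is compact and the denominator is continuous and bounded away from zero on $C$ by the definition of $M^+$. If one prefers to sidestep quotients entirely, the same steps go through verbatim using the equivalent characterization $1/\lambda_k^+=\sup_{C\in\mathcal{D}_k}\inf_{u\in C}\int_\Omega\rho(x)|u|^p\,dx$, where the monotonicity manipulations become completely transparent.
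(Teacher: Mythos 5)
Your argument is correct and is exactly the paper's intended proof: the authors only sketch it (``compare the Rayleigh quotient and use $W^{1,p}_0(\Omega_1)\subset W^{1,p}_0(\Omega_2)$''), and your write-up fills in the details — extension by zero as a genus-preserving odd embedding for domain monotonicity, and pointwise comparison of numerator and denominator plus $M_1^+\subset M_2^+$ for coefficient monotonicity. Nothing further is needed.
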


The proof follows easily by comparing the Rayleigh quotient and using the inclusion of
Sobolev spaces  $W_0^{1,p}(\Omega_1) \subset W_0^{1,p}(\Omega_2)$.

\section{Proof of the main result}

In this section we prove the main result of the paper, namely Theorem \ref{teo.main}.

Assume first that $\rho^+=0$ and $\rho_\ve^+\neq 0$ for every $\ve>0$. Let $u\in W^{1,p}_0(\Omega)$, then we have
$$
\int_{\Omega} \Phi_\ve(x,\nabla u)\, dx \ge \alpha \|\nabla u\|_p^p.
$$
On the other hand, by Corollary \ref{lema.clave.2}, we have
$$
\int_{\Omega} \rho_\ve |u|^p\, dx = \int_{\Omega} \rho |u|^p + o(1) \|u\|_p^{p-1} \|\nabla u\|_p \le o(1) \|u\|_p^{p-1} \|\nabla u\|_p.
$$
Hence, we get the bound
$$
\frac{\int_{\Omega} \Phi_\ve(x,\nabla u)\, dx}{\int_{\Omega} \rho_\ve |u|^p\, dx}\ge \frac{\alpha}{o(1)} \left(\frac{\|\nabla u\|_p^p}{\|u\|_p^p}\right)^{\frac{p-1}{p}}. 
$$
Now, taking the infimum in the former inequality, we obtain
$$
\lambda_{1,\ve}^+ \ge \frac{\alpha}{o(1)} \mu_1^{\frac{p-1}{p}},
$$
where $\mu_1$ is the first eigenvalue of the $p-$laplacian with Dirichlet boundary conditions.

From this, the first part of the theorem follows.

\medskip

Now, assume that $\rho^+\neq 0$. Then, $\rho_\ve^+\neq 0$ for every $\ve>0$ small enough.

%
%
%
%

Let us fix $\delta > 0$, and let $C\in \mathcal C_k$ be such that
$$
\sup_{v\in C}\int_\Omega \Phi(x,\nabla v) \,dx \le \lam_k^+ + \delta,
$$
with 
$$
\int_\Omega \rho(x) |v|^p\, dx =1 \quad \text{for any } v\in C.
$$
This last condition can be imposed without loss of generality by the homogeneity of $\Phi$.

Since $C$ is a compact set,  we can choose $r>0$ and $\{u_i\}_{i=1}^J \subset C$, $J=J(r)$,  such
that
\begin{align*}
C \subset \bigcup_{i=i}^J B(u_i, r), &\\
\left|\int_{\Omega}  |u - u_i|^p\, dx\right| <\frac{\delta}{\|\rho \|_{\infty}} & \qquad \mbox{ if }  u\in B(u_i, r),
\end{align*}

Since $\rho_\ve \stackrel{*}{\cd} \rho$, there exists some $\ve_0$ such that
$C\subset M^+_{\ve}$ for $0<\ve<\ve_0$.

By Lemma \ref{lema.mosco} we have that the functionals $\{F_\ve\}_{\ve>0}$ Mosco-converge to $F$ and this implies that, for any $1\le i\le J$, there exists a sequence $u_{\ve_j, i}\cd u_i$ such that
$$
\int_\Omega \Phi (x,\nabla u_i) \, dx = \lim_{j\to \infty} \int_\Omega \Phi_{\ve_j}(x,\nabla u_{\ve_j, i})\, dx.
$$
Moreover, we can assume that $u_{\ve_j, i}\to u_i$ in $L^p(\Omega)$, and thus  
$$
1-\delta\le \int_{\Omega}\rho_{\ve_j}(x) |u_{\ve_j,i}|^p \,dx \le 1+\delta.
$$

Following \cite{Ch-dP}, let us take $C_{\ve_j}$ the convex closure of $\{\pm
u_{\ve_j,i}\}_{i=i}^J$, which is a compact convex set (since it has dimension lower
than or equal to $J$). Observe that, since the functions $u_{\ve_j, i}$ are weakly convergent and hence bounded in $W^{1,p}_0(\Omega)$, the sets $C_{\ve_j}$ are bounded in $W^{1,p}_0(\Omega)$ uniformly in $\ve_j$.

We define the projection $P_{\ve_j} : C\to C_{\ve_j}$, and let us observe that, for any
$v\in C$, since $v\in B(u_i, r)$ for some $i$, we have
\begin{align*}
\|P_{\ve_j}(v)-v\|_p \le & \|P_{\ve_j}(u_i)  - v \|_p
 \\ \le &  \|P_{\ve_j}(u_i)  - v  +    u_{\ve_j, i} - u_{\ve_j, i}  \|_p
 \\
\le & \|P_{\ve_j}(u_i)  -     u_{\ve_j, i}\|_p + \| v   - u_{\ve_j, i}  \|_p
\\
\le & C \delta + r.
\end{align*}
On the other hand, since $\int_\Omega |v|^p \rho(x)\, dx=1$, we have that
$$
\int_\Omega |v|^p\, dx \ge \frac{1}{\|\rho\|_\infty} \int_\Omega |v|^p\rho(x)\, dx = \frac{1}{\|\rho\|_\infty}.
$$
Therefore, we conclude that
$$
\|P_{\ve_j}(v)\|_p \ge  \frac{1}{\|\rho\|_\infty^{\frac{1}{p}}} - (C\delta + r) \ge \theta>0
$$
and $G_{\ve_j} := P_{\ve_j}(C) \subset C_{\ve_j} \setminus B_\theta(0)$, and it has genus greater than or equal to $k$ for $\ve$ small enough. Again, the sets $G_{\ve_j}$ are uniformly bounded in $W^{1,p}_0(\Omega)$.

Now,
\begin{align*}
\lam_{\ve_j, k}^+ \le &  \sup_{v\in G_{\ve_j}} \frac{  \int_\Omega \Phi_{\ve_j}(x,\nabla v)\, dx}
{\int_{\Omega} \rho_{\ve_j}(x)  |v|^p\, dx}
 \\
  \le &
 \sup_{v\in G_{\ve_j}}\left(  \int_\Omega \Phi_{\ve_j}(x,\nabla v) \,dx\right)  \sup_{v\in G_{\ve_j}} \left(\frac{1}{\int_{\Omega}[\rho_{\ve_j}(x) - \rho(x)] |v|^p  + \rho(x)  |v|^p \, dx } \right)
 \\
  \le &  (1+ O(r)+O(\delta) + O(\ve_j)) \max_{1\le i\le J}   \int_\Omega \Phi_{\ve_j}(x,\nabla u_{\ve_j,i})  \, dx  
  \\
  \le & (1+ o(1)) \max_{1\le i\le J}   \int_\Omega \Phi(x,\nabla u_{i})  \,dx    \\
  \le & (1+ o(1))(\lam_k + \delta).
\end{align*}

Therefore, we have obtained the inequality
\begin{equation}\label{lowerboundlam}
 \lam_{\ve_j,k}^+ \le (1+o(1)) (\lam_k^+ + O(\delta)).
\end{equation}

Observe that, in particular, the sequence $\{\lambda_{\ve_j, k}^+\}_{j\in\N}$ is bounded for each $k\in \N$ and that
$$
\limsup_{j\to\infty} \lambda_{\ve_j,k}^+ \le \lambda_k^+.
$$

In order to prove the reverse inequality, that is $\lam_k^+ \le \liminf_{j\to\infty} \lam_{\ve_j,k}^+$,  let us start now with a family of compact sets $C_{\ve_j}\subset \{u\in W^{1,p}_0(\Omega) : \|\nabla
u\|_p=1\}$, and we choose $u_{\ve_j} \in C_{\ve_j}$ such that
$$
\frac{\int_\Omega \Phi_{\ve_j}(x,\nabla u_{\ve_j})  \,dx}{\int_{\Omega} \rho_{\ve_j}(x) |u_{\ve_j}|^p} = \sup_{v\in C_{\ve_j}} \frac{\int_\Omega \Phi_{\ve_j}(x,\nabla v)  \,dx}{\int_{\Omega} \rho_{\ve_j}(x) |v|^p} \le \lam_{\ve_j,k}^+ + \ve_j.
$$

We can extract a  sequence that we still denote by $\{u_{\ve_j}\}_{j\in \N}$, such that $u_{\ve_j} \rightharpoonup u_0$ in $W^{1,p}_0(\Omega)$ and so $|u_{\ve_j}|^p\to |u_0|^p$ in $L^1(\Omega)$.

Now, due to the Mosco-convergence of the functionals, $\Phi_{\ve}\to \Phi$, and the
weak convergence $\rho_\ve\stackrel{*}{\rightharpoonup} \rho$, we have
$$
\frac{\int_\Omega \Phi (x,\nabla u_0)  \,dx}{\int_{\Omega} \rho(x) |u_0|^p  \,dx} \le \liminf_{j \to \infty} \frac{ \int_\Omega \Phi_{\ve_j}(x,\nabla u_{\ve_j})  \,dx}{\int_{\Omega} \rho_{\ve_j}(x) |u_{\ve_j}|^p \,dx}  \le \liminf_{j \to \infty} \lam_{\ve_j,k}^+ + \ve_j < \infty.
$$

On the other hand, we can choose $v_{\ve_j}\in C_{\ve_j}$ such that
$$
\sup_{v\in C_{\ve_j}} \frac{ \int_\Omega \Phi(x,\nabla v)  \,dx}{ \int_\Omega \rho(x) |v|^p } \le
\frac{\int_\Omega \Phi(x,\nabla v_{\ve_j})  \,dx}{ \int_\Omega \rho(x) |v_{\ve_j}|^p } + \ve_j.
$$
We can extract some weakly convergent sequence denoted again by $\{v_{\ve_j}\}_{j\in \N}$,
such that $v_{\ve_j} \cd v_0$ in $W^{1,p}_0(\Omega)$.

Let us now show that
\begin{equation}\label{v0u0}
\frac{ \int_\Omega \Phi(x,\nabla v_0)  \,dx}{ \int_\Omega \bar\rho |v_0|^p\, dx}\le \frac{ \int_\Omega \Phi(x,\nabla u_0)  \,dx}{ \int_\Omega \bar\rho |u_0|^p \, dx}
\end{equation}

In fact, assume by contradiction that \eqref{v0u0} does not hold. Then there exists $\eta>0$ such that
$$
\frac{ \int_\Omega \Phi(x,\nabla v_0)  \,dx}{ \int_\Omega \bar\rho |v_0|^p \, dx} - \frac{ \int_\Omega \Phi(x,\nabla u_0)  \,dx}{ \int_\Omega \bar\rho |u_0|^p \, dx}>\eta.
$$
Hence
\begin{align*}
\liminf_{j\to\infty} \lambda_{\ve_j,k}^+ \ge \liminf_{j\to\infty} \frac{ \int_\Omega \Phi_{\ve_j}(x,\nabla v_{\ve_j})  \,dx}{\int_\Omega  \rho_{\ve_j}(x) |v_{\ve_j}|^p } \ge \frac{ \int_\Omega \Phi(x,\nabla v_0)  \,dx}{ \int_\Omega \bar\rho |v_0|^p }
>
\lam +\frac{\eta}{2},
\end{align*}
which is not possible, since
 $$
\lam= \lim_{\ve\to 0} \sup_{v\in C_{\ve}} \frac{ \int_\Omega \Phi_{\ve}(x,\nabla v)  \,dx}{
 \int_\Omega  \rho(\tfrac{x}{\ve}) |v|^p } \ge
 \frac{ \int_\Omega \Phi_{\ve}(x,\nabla v_{\ve})  \,dx}{ \int_\Omega
\rho(\tfrac{x}{\ve}) |v_{\ve}|^p } > \lam +\frac{\eta}{2},
$$
a contradiction. Thus,
$$\sup_{v\in C_{\ve}}
\frac{ \int_\Omega \Phi(x,\nabla v)  \,dx}{ \int_\Omega \bar\rho |v|^p } \le \lam.$$

Finally,
 \begin{align*}
 \lam_k = & \inf_{C\in \mathcal{C}_k} \sup_{v\in C} \frac{ \int_\Omega
\Phi(x,\nabla v)  \,dx}{ \int_\Omega \bar\rho |v|^p } \\
\le & \inf_{\ve_j}\sup_{v\in
C_{\ve_j}} \frac{ \int_\Omega \Phi(x,\nabla v)  \,dx}{ \int_\Omega \bar\rho |v|^p } \\
\le & \lam \\
\le & (1+o(1)) \lam_{\ve,k}
\end{align*}
since $\lam =  \lim_{j \to \infty} \lam_{\ve_j,k}$,
and by using  inequality
\eqref{lowerboundlam}, the proof is finished.

\section{Proofs of the divergence results}

\begin{proof}[Proof of Theorem \ref{main}]
We divide the proof in several parts. 

\begin{itemize}
\item[1.-] Case $\bar \rho =0$. First eigenvalue, lower bound.
\end{itemize}
It is enough to consider only the first positive eigenvalue $\lam_{\ve, 1}^+$, the
result for $\lam_{\ve, 1}^-$ follows by considering the weight $-\rho$.

We can bound $\lam_{\ve, 1}^+$ by below as follows:
\begin{equation} \label{z1}
\begin{split}
\frac{1}{\lam_{\ve, 1}^+} &= \sup_{v \in W_0^{1,p}} \left(\frac{ \int_\Omega \rho(\tfrac{x}{\ve}) |v|^p \, dx}{
 \int_\Omega \Phi(x,\nabla v)\, dx } \right)
 \\ \\
&\le \frac{\ve c_1 p}{\beta}  \|\rho\|_{L^\infty(\R^N)} \sup_{v \in W_0^{1,p}}
\left(\frac{ \|u\|_{L^p(\Omega)}^{p-1} \|\nabla u\|_{L^p(\Omega)} }{
 \int_\Omega |\nabla v|^p dx } \right)
 \\ \\
 &\le \ve C(p, \rho, c_1, \beta)  \sup_{v \in W_0^{1,p}}  \left( \frac{
  \int_\Omega |v|^p\, dx }{ \int_\Omega |\nabla v|^p\, dx } \right)^{p-1}
    \\
 \\ &\le \ve C(p, \rho, c_1, \beta, |\Omega|),
\end{split}
\end{equation}
where we have used Theorem \ref{lema.clave} in the first inequality.

Here the constant $C(p, \rho, c_1, \beta, |\Omega|)$ is obtained from Theorem
\ref{lema.clave} and then using the isoperimetric inequality, i.e., the first
eigenvalue in $\Omega$ is greater than the first eigenvalue of a ball $B_{|\Omega|}$
with the same measure than $\Omega$,
 $$
\sup_{v \in W_0^{1,p}}  \frac{\int_\Omega |v|^p \, dx}{\int_\Omega |\nabla v|^p\, dx} \le
\sup_{v \in W_0^{1,p}}  \frac{\int_{B_{|\Omega|}} |v|^p\, dx}{\int_{B_{|\Omega|}} |\nabla v|^p\, dx}
 $$

Therefore, the first positive eigenvalue goes to $+\infty$ at least as $\ve^{-1}$.

\begin{itemize}
\item[2.-] Case $\bar \rho =0$. First eigenvalue, upper bound.
\end{itemize}

The upper bound follows by taking as test function $v = (u^p +\ve u^p
|\rho(\tfrac{x}{\ve})|^{p-2}\rho(\tfrac{x}{\ve}))^{1/p}$, with a positive function
$u\in C_c^{\infty}(\Omega)$, and we get
\begin{equation} \label{z2}
\begin{split}
\frac{1}{ \lam_{\ve, 1}^+ } &= \sup_{v \in W_0^{1,p}} \left( \frac{ \int_\Omega \rho(\tfrac{x}{\ve}) |v|^p\, dx
  }{\int_\Omega \Phi(x,\nabla v)\, dx} \right) \\
&\ge\frac{ \int_\Omega u^p \rho(\tfrac{x}{\ve}) \left(1 +\ve
  |\rho(\tfrac{x}{\ve})|^{p-2}\rho(\tfrac{x}{\ve})\right)\, dx }{
   \alpha  \int_\Omega |\nabla (u^p +\ve u^p
|\rho|^{p-2}\rho)^{1/p}|^p\, dx }\\
&\ge   C(u, \rho, p, \alpha)  \int_\Omega \ve
u^p |\rho(\tfrac{x}{\ve})|^{p}\,   dx
\end{split}
\end{equation}
where the constant $C$  is strictly positive for $\ve$ small enough.

Hence, we have proved that $\lam_{\ve,1}^{+} = O(\ve^{-1})$.

\begin{itemize}
\item[3.-] Case $\bar \rho =0$. Higher eigenvalues, lower bound.
\end{itemize}

This is immediate from Step 1, since $\lambda_{\ve, k}^+\ge \lambda_{\ve, 1}^+\ge C\ve^{-1}$.

\begin{itemize}
\item[4.-] Case $\bar \rho =0$. Higher eigenvalues, upper bound.
\end{itemize}

Let $k\in\N$ be fixed and fix $\ve_0>0$ small such that there exists $\{Q_i\}_{i=1}^k$ where $Q_i\subset \Omega$ is a cube of side lenght $\ve_0$ and $Q_i\cap Q_j=\emptyset$ (we consider open cubes).

By the scaling properties of the eigenvalues, we have that $\mu_{1,\ve}(Q_i) = \ve_0^{-p}\mu_{1,\frac{\ve}{\ve_0}}(Q_0)$, where $\mu_{1,\ve}(U)$ is the first eigenvalue of the $p-$laplacian in $U\subset \R^N$ with Dirichlet boundary conditions and weight $\rho_\ve$.

Let $u_i$ be the first eigenfunction corresponding to $\mu_{1,\ve}(Q_i)$ and extended by 0 to $\Omega$, and let us define the set
$$
C_k = span\{ u_i : 1\le i \le k\} \cap B,
$$
where $B$ is the unit ball in $W_0^{1,p}(\Omega)$. Clearly, $C_k$ is a $k-$dimensional set, since the functions have disjoint support, and hence $\gamma(C_k)=k$.

By taking an arbitrary element $v=\sum b_i u_i \in C_k$, we  get
\begin{align*}
\frac{\int_{\Omega} a_{\ve}(x,\nabla v)\cdot \nabla v\, dx}{\int_{\Omega} \rho(\tfrac{x}{\ve})|v|^p\, dx}&=
\frac{ \sum_{i=1}^k  \int_{Q_i}  a_{\ve}(x,b_i\nabla u_i)\cdot b_i \nabla u_i \, dx
}{ \sum_{i=1}^k  \int_{Q_i} \rho(\tfrac{x}{\ve})| b_i u_i|^p\, dx}\\
&\le \beta \frac{ \sum_{i=1}^k  \int_{Q_i}  |\nabla u_i|^p \, dx
}{ \sum_{i=1}^k  \int_{Q_i} \rho(\tfrac{x}{\ve})|u_i|^p\, dx}\\
&= \beta \ve_0^{-p} \mu_{1,\frac{\ve}{\ve_0}}(Q_0).
\end{align*}
Since $v$ was arbitrary, we find that
\begin{align*}
\lambda_{k,\ve}^+ &\le \sup_{v\in C_k}\frac{\int_{\Omega} a_{\ve}(x,\nabla v)\cdot \nabla v\, dx}{\int_{\Omega} \rho(\tfrac{x}{\ve})|v|^p\, dx}\\
&\le \beta \ve_0^{-p} \mu_{1,\frac{\ve}{\ve_0}}(Q_0).
\end{align*}
Finally, since from Step 1 we have that $\mu_{1,\ve}(Q_0)\le C\ve^{-1}$, we obtain the desired result.

\begin{itemize}
\item[5.-] Case $\bar \rho < 0$. Lower bound.
\end{itemize}

Here  we work with $\sigma = \rho+c$, we add a positive constant to  $\rho$  such that
$\bar \sigma = 0$. Since
 $$
 \int_{\Omega } \rho(x) |u|^p\, dx \le \int_{\Omega} \sigma(x)|u|^p \,dx,
 $$
for any $u\in W_0^{1,p}(\Omega)$, we have the Sturmian type comparison $\lam_k^+(\rho)
\ge \lam_k^+(\sigma)$, which follows easily by comparing the Rayleigh quotients.
Together with the previous part of the proof we obtain
 $$C \ve^{-1} \le \lam_k^+(\sigma) \le \lam_k^+(\rho).$$

\begin{itemize}
\item[5.-] Case $\bar \rho < 0$. Upper bound.
\end{itemize}

Let $Q\subset [0,1]^N$ be a cube such that $\int_Q \rho^+(x)\, dx >0$. Let
$\{\mu_k^+\}$ be the positive eigenvalues of
\begin{equation}\label{aux}
\begin{cases}
-div(|\nabla u|^{p-2}\nabla u) = \mu \rho |u|^{p-2}u, & \text{in } Q\\
u=0 & \text{on } \partial Q.
\end{cases}
\end{equation}
   By using Theorem \ref{sturmian},   the variational characterization of eigenvalues together
with inequality \eqref{cont.coer.phi} and the scaling of eigenvalues, we get
 $$
 \lambda_{\ve, k}(\Omega) \le \lambda_{\ve, k}(Q_{\ve}) \le \beta \ve^{-p}\mu_{k}(Q),
 $$
and the upper bound is proved.

\begin{itemize}
\item[6.-] Case $\bar \rho >0$.
\end{itemize}

This one follows from the previous one, by changing $\rho \to -\rho$.

The proof is finished.
\end{proof}

\section{Convergence of  eigenvalues}

In this final section we prove the convergence rate of the eigenvalues in the case where the operators are independent of $\ve$ and the weights are periodic.

\begin{proof}[Proof of Theorem \ref{main2}]

Let us recall the following characterization of eigenvalues:
$$
 \frac{1}{\lambda_k} = \sup_{C\in \mathcal{C}_k} \inf_{u\in C} \int_{\Omega} \rho(x) |u|^p \, dx.
$$
where $u$ is normalized with $\|\nabla u\|_p=1$.

 Let us fix $k$ and given $\ve>0$ we can choose $C_{\ve}\in \mathcal{C}_k$ such that
$$
 \frac{1}{\lambda_{k,\ve}} \le \inf_{u\in C_{\ve}} \int_{\Omega} \rho(\tfrac{x}{\ve}) |u|^p \,dx + \ve.
$$
Now, by using that $C_{\ve}$ is compact, we can choose $\{u_i\}_{i=1}^J \subset
C_{\ve}$, and $r>0$ ($r=r(\bar \rho, p, \ve, C_\ve)$) such that
\begin{align*}
C_{\ve} \subset \bigcup_{i=i}^J B(u_i, r), &\\
\left|\int_{\Omega} \bar\rho (|u|^p - |u_i|^p) \, dx\, \right| <\ve & \qquad \mbox{ if }  u\in B(u_i, r),
\end{align*}
and we choose $u_{i_0}$ such that
 $$
\int_{\Omega} \bar\rho |u_{i_0}|^p \,dx  = \min_{1\le i \le J}
\int_{\Omega} \bar\rho |u_{i}|^p \,dx.
 $$

Therefore, we have
\begin{align*}
 \frac{1}{\lambda_{\ve, k}} &  \le   \inf_{u\in C_{\ve}} \int_{\Omega} \rho(\tfrac{x}{\ve}) |u|^p \,dx + \ve \\
&  \le    \int_{\Omega} \rho(\tfrac{x}{\ve}) |u_{i_0}|^p \,dx + \ve \\
&  =    \int_{\Omega} (\rho(\tfrac{x}{\ve}) - \bar\rho)|u_{i_0}|^p \,dx  +
  \int_{\Omega}  \bar\rho |u_{i_0}|^p \,dx + \ve \\
&  \le  \int_{\Omega}  \bar\rho |u_{i_0}|^p \,dx + O(\ve) \\
& \le \inf_{u\in C_{\ve}} \int_{\Omega}  \bar\rho |u|^p \,dx  + O(\ve) \\
& \le \sup_{C\in \mathcal{C}_k} \inf_{u\in C} \int_{\Omega}  \bar\rho |u|^p dx  + O(\ve) = \frac{1}{\lambda_{k}} + O(\ve),
\end{align*}
where the term $O(\ve)$ is given by Theorem \ref{lema.clave}.

The same arguments can be used interchanging the role of $\lambda_{\ve, k}$ and
$\lambda_{k}$.

Hence, from
$$
\left| \frac{1}{\lambda_{\ve, k}}  - \frac{1}{\lambda_{ k}}  \right| \le C \ve.
$$
Finally, using the asymptotic behavior of eigenvalues, $\lam_k \approx C k^{p/N}$,  we obtain the desired bound and the proof is finished.
\end{proof}

\section*{Acknowledgements}

This paper was partially supported by Universidad de Buenos Aires under grant UBACyT 20020130100283BA, by CONICET under grant PIP 2009 845/10 and by ANPCyT under grant PICT 2012-0153. 

\bibliographystyle{amsplain}
\bibliography{biblio}

\def\cprime{$'$}
\providecommand{\bysame}{\leavevmode\hbox to3em{\hrulefill}\thinspace}
\providecommand{\MR}{\relax\ifhmode\unskip\space\fi MR }
\providecommand{\MRhref}[2]{%
  \href{http://www.ams.org/mathscinet-getitem?mr=#1}{#2}
}
\providecommand{\href}[2]{#2}
\begin{thebibliography}{10}

\bibitem{Con}
L.~Baffico, C.~Conca, and M.~Rajesh, \emph{Homogenization of a class of
  nonlinear eigenvalue problems}, Proc. Roy. Soc. Edinburgh Sect. A
  \textbf{136} (2006), no.~1, 7--22. \MR{2217505 (2007b:35021)}

\bibitem{DF1992}
Andrea Braides, Valeria Chiad{\`o}~Piat, and Anneliese Defranceschi,
  \emph{Homogenization of almost periodic monotone operators}, Ann. Inst. H.
  Poincar\'e Anal. Non Lin\'eaire \textbf{9} (1992), no.~4, 399--432.
  \MR{1186684 (94d:35014)}

\bibitem{Ch-dP}
Thierry Champion and Luigi De~Pascale, \emph{Asymptotic behaviour of nonlinear
  eigenvalue problems involving {$p$}-{L}aplacian-type operators}, Proc. Roy.
  Soc. Edinburgh Sect. A \textbf{137} (2007), no.~6, 1179--1195. \MR{2376876
  (2009b:35315)}

\bibitem{Chi1}
Valeria Chiad{\`o}~Piat, Gianni Dal~Maso, and Anneliese Defranceschi,
  \emph{{$G$}-convergence of monotone operators}, Ann. Inst. H. Poincar\'e
  Anal. Non Lin\'eaire \textbf{7} (1990), no.~3, 123--160. \MR{1065871
  (91f:49018)}

\bibitem{FBPS2}
J.~Fern\'andez~Bonder, J.~P. Pinasco, and A.~M. Salort, \emph{Eigenvalue
  homogenization for quasilinear elliptic equations with different boundary
  conditions}, ArXiv e-prints (2012).

\bibitem{FBPS}
Juli{\'a}n Fern{\'a}ndez~Bonder, Juan~P. Pinasco, and Ariel~M. Salort,
  \emph{Convergence rate for some quasilinear eigenvalues homogenization
  problems}, J. Math. Anal. Appl. \textbf{423} (2015), no.~2, 1427--1447.
  \MR{3278207}

\bibitem{FBPS1}
Juli\'an Fern\'andez~Bonder, Juan~Pablo Pinasco, and Ariel~M. Salort,
  \emph{Some results on quasilinear eigenvalue problems}, To appear in Rev. UMA
  (2015).

\bibitem{Fus}
N.~Fusco and G.~Moscariello, \emph{On the homogenization of quasilinear
  divergence structure operators}, Ann. Mat. Pura Appl. (4) \textbf{146}
  (1987), 1--13. \MR{916685 (89a:35025)}

\bibitem{Naz}
Sergey~A. Nazarov, Iryna~L. Pankratova, and Andrey~L. Piatnitski,
  \emph{Homogenization of the spectral problem for periodic elliptic operators
  with sign-changing density function}, Arch. Ration. Mech. Anal. \textbf{200}
  (2011), no.~3, 747--788. \MR{2796132 (2012g:35024)}

\bibitem{Rab2}
Paul~H. Rabinowitz, \emph{Minimax methods in critical point theory with
  applications to differential equations}, CBMS Regional Conference Series in
  Mathematics, vol.~65, Published for the Conference Board of the Mathematical
  Sciences, Washington, DC, 1986. \MR{845785 (87j:58024)}

\bibitem{Salort2}
A.~M. {Salort}, \emph{Eigenvalues homogenization for the fractional laplacian
  operator}, ArXiv e-prints (2013).

\bibitem{Salort}
Ariel~Martin Salort, \emph{Convergence rates in a weighted {F}u\u cik problem},
  Adv. Nonlinear Stud. \textbf{14} (2014), no.~2, 427--443. \MR{3194363}

\end{thebibliography}

\end{document}